\def\namedlabel#1#2{\begingroup
	#2%
	\def\@currentlabel{#2}%
	\phantomsection\label{#1}\endgroup
}
\numberwithin{equation}{section}
\newtheorem{thm}{Theorem}[section]
\newtheorem{prop}[thm]{Proposition}
\newtheorem{lem}[thm]{Lemma}
\newtheorem{coro}[thm]{Corollary} 
\newtheorem{conj}[thm]{Conjecture}
\newtheorem*{prop*}{Proposition}
\newtheorem*{thm*}{Theorem}
\newtheorem{thmletter}{Theorem}
\theoremstyle{theorem}
\newtheorem*{warning*}{Warning}
\newtheorem*{rem*}{Remark}
\theoremstyle{definition} 
\newtheorem{rem}[thm]{Remark}
\newtheorem*{nota*}{Notation}
\newtheorem{Fact}{Fact}
\newcommand{\I}{\mathcal I}
\newcommand{\J}{\mathcal J}
\newcommand{\Q}{\overline{\mathbb Q}}
\newcommand{\C}{\mathbb{C}}
\newcommand{\Z}{\mathbb{Z}}
\newcommand{\OK}{{\mathcal O_{\mathbb K}}}
\newcommand{\X}{\boldsymbol{X}}
\newcommand{\Y}{\boldsymbol{Y}}
\newcommand{\f}{{\boldsymbol f}}
\newcommand{\g}{{\boldsymbol g}}
\newcommand{\bmu}{{\boldsymbol{\mu}}}
\newcommand{\bnu}{{\boldsymbol{\nu}}}
\newcommand{\omeg}{\boldsymbol \omega}
\newcommand{\p}{{\mathfrak{p}}}
\newcommand{\val}{\operatorname{val}_{z}}
\newcommand{\house}[1]{ \overline{\vert#1\vert} }
\title[Independence measures for values of $E$ and $M$-functions]{Algebraic independence measures for values of $E$-functions and 
$M$-functions}
\author{Boris Adamczewski}
\address{
	Univ Lyon, Universit\'e Claude Bernard Lyon 1\\
	CNRS UMR 5208, Institut Camille Jordan \\
	F-69622 Villeurbanne Cedex, France}
\email{Boris.Adamczewski@math.cnrs.fr}
\author{Colin Faverjon}
\address{
	Univ Lyon, Universit\'e Claude Bernard Lyon 1\\
	CNRS UMR 5208, Institut Camille Jordan \\
	F-69622 Villeurbanne Cedex, France}
\email{colin.faverjon@math.cnrs.fr}
\date{\today}
\date{}
\thanks{}
\begin{document}
\begin{abstract}  
In this article, we establish a Liouville-type inequality for polynomials evaluated at the values of arbitrary Siegel $E$-functions at non-zero algebraic points. 
Additionally, we provide a comparable result within the framework of Mahler $M$-functions.
 \end{abstract}

	\bibliographystyle{abbvr}
	\maketitle

\centerline{ \emph{To Yuri V. Nesterenko for his seventy-fifth (\(+\varepsilon\)) birthday.}}
	
\section{Introduction}\label{sec:introduction}

It is a well-known fact that proofs of irrationality, transcendence, linear independence, and algebraic independence can, in general, be quantified, albeit sometimes with  
considerable effort. These quantitative results are commonly referred to as \emph{measures} of irrationality, transcendence, etc. In this paper, we focus primarily on measures of algebraic independence. 
Let $\Q \subset \C$ denote the field of algebraic numbers. 
Given complex numbers $\xi_1,\ldots,\xi_m$ that are algebraically independent over $\Q$, an algebraic independence measure for $\xi_1,\ldots,\xi_m$ is an inequality of the form  
$$
\vert P(\xi_1,\ldots,\xi_m) \vert \geq \Phi(\deg(P), H(P))\,,
$$
valid for any $P \in \Z[X_1,\ldots,X_m]$, and where the function $\Phi$ is a positive-valued function. Here, we let 
$\deg(P)$ denote the (total) degree of $P$, and $H(P)$  the height of $P$, which is defined as the maximum of the moduli of its coefficients. 

The framework of Siegel $E$-functions provides one of the most important result in transcendental number theory: the Siegel-Shidlovskii theorem (cf.\ \cite[3rd Fundamental Theorem]{Sh_Liv}).  
A similar result also holds for Mahler $M$-functions and is known as Nishioka's theorem (cf.\ \cite[Thm.\ 4.2.1]{Ni_Liv}).  
Not surprisingly, both of these results have been quantified.  

\begin{thmletter}
	\label{thm:LGN}
 Let $m\geq 1$ be an integer,  
 $f_1,\ldots,f_m \in \Q[[z]]$ be algebraically independent over $\Q(z)$ and $\alpha \in \Q\setminus\{0\}$. 
 Then, in the two following cases,  there exist two positive real numbers $C_1$ and $C_2$ such that  the inequality 
		$$
	\vert P(f_1(\alpha),\ldots,f_m(\alpha)) \vert \geq C_1H(P)^{-C_2\deg(P)^m}  \,,
	$$
	holds  for all non-zero polynomials  $P\in \mathbb Z[X_1,\ldots,X_m]$.   
\begin{itemize}
	\item[\rm (E)] The functions $f_1,\ldots,f_m$ are $E$-functions related by a linear differential system $Y'(z)=A(z)Y(z)$,  $A \in \mathcal M_m(\Q(z))$, and $\alpha$ is regular with respect to this system.

	\item[\rm (M$_q$)] The functions $f_1,\ldots,f_m$ are $M_q$-functions related by a Mahler system $Y(z^q)=A(z)Y(z)$, $q\in\mathbb N_{\geq 2}$, $A(z) \in {\rm GL}_m(\Q(z))$, $\alpha$ is regular with respect to this system and $\vert \alpha \vert < 1$.	
\end{itemize}
In each case, $C_1$ depends on $\deg(P)$, $f_1,\ldots,f_r$ and $\alpha$, while $C_2$ depends on $f_1,\ldots,f_r$ and $\alpha$ but not on $\deg(P)$. 
\end{thmletter}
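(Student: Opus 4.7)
The plan is to reduce the claimed algebraic independence measure to a linear independence measure for the values at $\alpha$ of the monomials of bounded degree in $f_1,\ldots,f_m$. Writing $P=\sum_{|\mathbf{e}|\leq d}\lambda_{\mathbf{e}}X^{\mathbf{e}}$ with $d=\deg(P)$ and $|\lambda_{\mathbf{e}}|\leq H(P)$, and setting $g_{\mathbf{e}}(z):=\prod_i f_i(z)^{e_i}$, one has
$$
P(f_1(\alpha),\ldots,f_m(\alpha)) = \sum_{|\mathbf{e}|\leq d} \lambda_{\mathbf{e}}\,g_{\mathbf{e}}(\alpha),
$$
a linear form with integer coefficients of modulus at most $H(P)$ in the $N=\binom{d+m}{m}$ values $g_{\mathbf{e}}(\alpha)$. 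Since $N$ is of order $d^m/m!$, a linear independence measure of the shape $C_1 H(P)^{-C_2 N}$ for these values, with $C_2$ independent of $d$, would yield exactly the claimed bound.

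In case (E), the Leibniz rule together with $f_i'=\sum_j A_{ij}f_j$ shows that each $g_{\mathbf{e}}'$ is a $\Q(z)$-linear combination of the $g_{\mathbf{e}'}$ with $|\mathbf{e}'|\leq d$, so the vector $(g_{\mathbf{e}})_{|\mathbf{e}|\leq d}$ satisfies a differential system $Y'=BY$ with $B\in\mathcal{M}_N(\Q(z))$, whose poles are contained in those of $A$. Therefore $\alpha$ is regular for $B$, and the $g_{\mathbf{e}}$'s are $\Q(z)$-linearly independent since $f_1,\ldots,f_m$ are algebraically independent over $\Q(z)$. Applying the quantitative version of the Siegel--Shidlovskii theorem (Shidlovskii, Galochkin, Nesterenko; see \cite{Sh_Liv}) to this enlarged system at $\alpha$ produces the desired measure. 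The crucial feature is that the exponent there is \emph{linear} in the dimension $N$ of the system, which is what converts $N$ into the factor $d^m$ in the final bound while keeping $C_2$ independent of $d$.

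Case (M$_q$) proceeds along the same lines, with the Mahler equation in place of the differential equation: one verifies directly that $(g_{\mathbf{e}}(z^q))=B(z)(g_{\mathbf{e}}(z))$ for some $B\in\Gl_N(\Q(z))$ whose singular locus is controlled by that of $A$, so that regularity of $\alpha$ transfers, and of course $|\alpha|<1$ is preserved. One then invokes the quantitative version of Nishioka's theorem in the same way (cf.~\cite{Ni_Liv}). The main obstacle in both cases is not the reduction itself but the careful separation of the two constants: $C_1$ may absorb $d$-dependent quantities such as the height of $B$, common denominators of the $g_{\mathbf{e}}$, and local analytic data at $\alpha$, whereas $C_2$ must come purely from the exponent of the ambient linear independence measure and depend only on $\alpha$ and the original system. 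Verifying that the classical Shidlovskii and Nishioka quantitative bounds indeed produce an exponent linear in the system dimension is the essential ingredient ensuring this separation.
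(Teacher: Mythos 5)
Your reduction begins promisingly: writing $P(f_1(\alpha),\ldots,f_m(\alpha))=\sum_{|\mathbf{e}|\leq d}\lambda_{\mathbf{e}}g_{\mathbf{e}}(\alpha)$ with $g_{\mathbf{e}}=\prod f_i^{e_i}$, and observing that the monomials of degree $\leq d$ are $\Q(z)$-linearly independent, satisfy a tensor-power system $Y'=BY$ of size $N=\binom{d+m}{m}$ whose poles lie among those of $A$, so that $\alpha$ remains regular. All of that is correct. The gap is in the step where you invoke a ``quantitative Siegel--Shidlovskii theorem'' as a linear independence measure for this enlarged system. For $m\geq 2$ and $d\geq 2$ the monomials $g_{\mathbf{e}}$ are linearly independent but \emph{algebraically dependent} over $\Q(z)$, so the classical quantitative Siegel--Shidlovskii (and Nishioka) results, which presuppose algebraic independence, do not apply. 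What you actually need is a linear independence measure for linearly independent but algebraically dependent $E$-functions (resp.\ $M_q$-functions) over a general number field $\mathbb K$ --- and, as the paper itself stresses, such a measure was \emph{not} classically available: it was only known when $\mathbb K$ is $\mathbb Q$ or an imaginary quadratic field, until the very recent work of Fischler and Rivoal and the results of the present paper. Citing it as a black box is therefore either circular (since it is a special case of what is being proved) or anachronistic.

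The underlying reason the naive linear-form reduction fails over a number field of degree $h>1$ is quantitative: in the determinant argument one must use $|\Delta|\geq\house{\Delta}^{1-h}$, and this $h$-dependent loss can only be absorbed by a dimension count of the form $vh<w$ (Lemma~\ref{lem:7-8}). That inequality requires the auxiliary parameter $\delta$: one must work with monomials of degree up to $\delta d$, with $\delta$ chosen large in terms of $m$ and $h$, so that the ``good'' Pad\'e forms outnumber the ``bad'' forms by more than a factor of $h$. A linear form cannot be $\delta$-amplified --- you cannot multiply $L$ by polynomials and remain linear --- which is precisely why algebraic independence measures over a general $\mathbb K$ are obtained directly from the polynomial structure rather than via linear ones. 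This is exactly what Lang's proof, and the paper's proof of Theorem~\ref{thm:measure_system}, do: they run Siegel's construction on the full family of monomials of degree $\leq\delta d$, together with the ideal-theoretic decomposition involving $\I_1(\f)$ and $\J=\I_1(\f)+(P)$, instead of shipping the problem off to a separate linear independence measure. Your approach would work as stated for $\mathbb K=\mathbb Q$ or imaginary quadratic (granted careful tracking of how the constants of the enlarged system feed into $C_1$), but for general $\mathbb K$ the $\delta$-amplification is an essential ingredient that cannot be bypassed.
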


A proof of this result was first provided by Lang~\cite{La62} in Case (E) and by Becker~\cite{Bec91} in Case (M$_q$). It has a long history, particularly concerning the constants $C_1$ and $C_2$, which we will briefly review in Section~\ref{sec:history}. We also refer the reader to that section for definitions related to the theorem, such as those of $E$-functions, $M_q$-functions, and regular points.

Despite its generality, Theorem~\ref{thm:LGN} has three important limitations.

\medskip

\begin{enumerate}[label=(\roman*)]

\item \label{Assump_i} The functions $f_1,\ldots,f_m$ are assumed to be algebraically independent over $\Q(z)$.

\medskip

\item\label{Assump_ii} The functions $f_1,\ldots,f_m$ are assumed to satisfy a linear (differential/Mahler) system of size $m$.

\medskip

\item \label{Assump_iii} The point $\alpha$ is assumed to be regular with respect to the corresponding system mentioned in \ref{Assump_ii}.

\end{enumerate}

There are valid reasons for making these three assumptions. According to the Siegel-Shidlovskii and Nishioka theorems, these conditions guarantee that the numbers $f_1(\alpha),\ldots,f_m(\alpha)$ are algebraically independent. In his monograph \cite[Chap.\ 12, \S 4]{Sh_Liv}, Shidlovskii provided several generalizations of Case (E) of Theorem~\ref{thm:LGN}, where \ref{Assump_i} is replaced by some technical conditions that ensure the algebraic independence of $f_1(\alpha),\ldots,f_r(\alpha)$ for some $r<m$, and then a lower bound for $\vert P(f_1(\alpha),\ldots,f_r(\alpha))\vert$ is derived. 
In this paper, however, we take the opposite approach. Our main goal is to show how to remove these three assumptions and explain why doing so is relevant.

Our first main result is a generalization of Theorem~\ref{thm:LGN} in which Assumption~\ref{Assump_i} is removed. The trade-off for this increased generality is that we can no longer rule out the possibility that $P(f_1(\alpha),\ldots,f_m(\alpha))$ vanishes. Consequently, our results take the form of an alternative, much like the Liouville inequality (see, for instance, \cite[p.\ 84--85]{Wa_Liv}). The latter ensures that, given algebraic numbers $\alpha_1,\ldots,\alpha_m$, there exist two positive real numbers 
$C_1$ and $C_2$ such that  the following alternative holds  for all polynomials $P\in\mathbb Z[X_1,\ldots,X_m]$: 
\begin{equation}\label{eq:liouville}
	\text{either } \;  P(\alpha_1,\ldots,\alpha_m)=0 \quad
\text{ or } \quad \vert P(\alpha_1,\ldots, \alpha_m) \vert \geq C_1H(P)^{-C_2} \,.
	\end{equation}
	Furthermore, $C_1$ depends on  $\alpha_1,\ldots,\alpha_m$  and 
	$\deg(P)$, while $C_2$ depends only on   $\alpha_1,\ldots,\alpha_m$.

\begin{thm}
		\label{thm:measure_system}  Let $m\geq 1$ be an integer, $f_1,\ldots,f_m \in \Q[[z]]$ and $\alpha \in \Q\setminus\{0\}$. Set $t:={\rm tr.deg}_{\Q(z)}(f_1,\ldots,f_m)$. 
		Then, in the two following cases,  there exist two positive real numbers $C_1$ and $C_2$ such that  the 
		 following alternative holds  for all polynomials $P\in\mathbb Z[X_1,\ldots,X_m]$:   
	\begin{eqnarray*}
	\text{ either } & P(f_1(\alpha),\ldots,f_m(\alpha))=0 \\
\text{ or } & \vert P(f_1(\alpha),\ldots,f_m(\alpha)) \vert \geq C_1H(P)^{-C_2\deg(P)^t} \,.
	\end{eqnarray*}	  
		\begin{itemize}
	\item[\rm (E)] The functions $f_1,\ldots,f_m$ are $E$-functions related by a linear differential system $Y'(z)=A(z)Y(z)$,  $A \in \mathcal M_m(\Q(z))$, and $\alpha$ is regular with respect to this system.

	\item[\rm (M$_q$)] The functions $f_1,\ldots,f_m$ are $M_q$-functions related by a Mahler system $Y(z^q)=A(z)Y(z)$, $q\in\mathbb N_{\geq 2}$, $A(z) \in {\rm GL}_m(\Q(z))$, $\alpha$ is regular with respect to this system and $\vert \alpha \vert < 1$.	
\end{itemize}
		In each case, $C_1$ depends on $\deg(P)$, $f_1,\ldots,f_r$ and $\alpha$, while $C_2$ depends on $f_1,\ldots,f_r$ and $\alpha$ but not on $\deg(P)$. 
\end{thm}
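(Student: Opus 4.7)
The plan is to reduce Theorem~\ref{thm:measure_system} to Theorem~\ref{thm:LGN} by eliminating the algebraic dependencies among $f_1(\alpha),\ldots,f_m(\alpha)$ through an iterated-resultant construction (morally a norm from $L$ down to the purely transcendental subfield), and then converting the resulting polynomial --- which a priori has algebraic coefficients --- into one with integer coefficients by Galois averaging.

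Set $\beta_i := f_i(\alpha)$ and reorder so that $f_1,\ldots,f_t$ forms a transcendence basis of $\Q(z)(f_1,\ldots,f_m)$ over $\Q(z)$. The strong forms of the Siegel--Shidlovskii theorem (case~E) and of Nishioka's theorem (case~M$_q$) --- asserting that the transcendence degree is preserved upon evaluation at a regular point --- guarantee that $\beta_1,\ldots,\beta_t$ are algebraically independent over $\Q$ and that $L:=\Q(\beta_1,\ldots,\beta_m)$ is a finite extension of $K:=\Q(\beta_1,\ldots,\beta_t)$. For each $j>t$ select a non-zero polynomial $R_j\in\Q[X_1,\ldots,X_t,Y]$ obtained by specializing at $z=\alpha$ the functional algebraic dependence of $f_j$ on $f_1,\ldots,f_t$, so that $R_j(\beta_1,\ldots,\beta_t,\beta_j)=0$; by taking $R_j$ to be a minimal polynomial and using the algebraic independence of $\beta_1,\ldots,\beta_t$, its leading $Y$-coefficient can be arranged to be non-zero at $(\beta_1,\ldots,\beta_t)$. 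All coefficients of the $R_j$ lie in a common number field $\mathbb{K}$ of degree $d$ over $\mathbb{Q}$.

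Given $P\in\Z[X_1,\ldots,X_m]$ with $\gamma:=P(\beta_1,\ldots,\beta_m)\neq 0$, I would form the iterated resultant
\[
\tilde P \;:=\; \operatorname{Res}_{X_{t+1}}\!\bigl(\cdots\operatorname{Res}_{X_m}(P,R_m)\cdots,R_{t+1}\bigr) \;\in\; \mathbb{K}[X_1,\ldots,X_t].
\]
Sylvester's formula identifies $\tilde P(\beta_1,\ldots,\beta_t)$, up to a non-zero factor built from the leading $Y$-coefficients of the $R_j$ at $(\beta_1,\ldots,\beta_t)$, with the norm $N_{L/K}(\gamma)$; hence $\tilde P(\beta_1,\ldots,\beta_t)\neq 0$. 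Standard Sylvester-determinant estimates give $\deg\tilde P\le c_1\deg P$ and $H(\tilde P)\le c_2 H(P)^{c_3}c_4^{\deg P}$. Averaging over Galois conjugates and clearing denominators, set $Q:=\lambda\prod_{\tau:\mathbb{K}\hookrightarrow\mathbb{C}}\tau(\tilde P)\in\Z[X_1,\ldots,X_t]$, which is non-zero with $\deg Q=O(\deg P)$ and $H(Q)\le c_5 H(P)^{c_6}c_7^{\deg P}$. Since $\beta_1,\ldots,\beta_t$ are algebraically independent over $\Q$ and arise from an extracted size-$t$ sub-system of the original system (regular at $\alpha$), Theorem~\ref{thm:LGN} applies to $Q$ and gives
\[
|Q(\beta_1,\ldots,\beta_t)| \;\ge\; C_1'(\deg P)\,H(P)^{-C_2'\deg(P)^{t}},
\]
the exponential-in-$\deg P$ contribution to $\log H(Q)$ being absorbed into the prefactor $C_1'(\deg P)$.

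To conclude, write $|Q(\beta_1,\ldots,\beta_t)|=|\tilde P(\beta_1,\ldots,\beta_t)|\prod_{\tau\neq\mathrm{id}}|\tau(\tilde P)(\beta_1,\ldots,\beta_t)|$ and $|\tilde P(\beta_1,\ldots,\beta_t)|\asymp|\gamma|\prod_{\sigma\neq\mathrm{id}}|\sigma(\gamma)|$, where $\sigma$ runs over the $K$-embeddings of $L$, and bound every conjugate factor trivially from above by $C^{\deg P}H(P)^{O(1)}$, with constants depending on the houses of the $\beta_j$ and their Galois conjugates. Dividing these out yields $|P(\beta_1,\ldots,\beta_m)|\ge C_1 H(P)^{-C_2\deg(P)^{t}}$ as required. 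The hard part, in my view, is the non-vanishing of $\tilde P(\beta_1,\ldots,\beta_t)$, since a naive iterated resultant may collapse through the specialization of a leading coefficient; this is handled by the careful choice of the $R_j$ described above together with the algebraic independence of $\beta_1,\ldots,\beta_t$. A secondary technical point --- but not a genuine obstacle --- is the extraction of a size-$t$ sub-system satisfying the hypotheses of Theorem~\ref{thm:LGN}, which is standard in the $E$-function and Mahler-module literature.
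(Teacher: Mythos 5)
Your reduction hinges on applying Theorem~\ref{thm:LGN} to the transcendence basis $f_1,\ldots,f_t$, and you dismiss as ``standard'' the extraction of a size-$t$ linear differential (resp.\ Mahler) system with $f_1,\ldots,f_t$ as a vector solution. This is precisely where the argument breaks: no such size-$t$ system need exist. A transcendence basis need not be closed under differentiation (resp.\ under $z\mapsto z^q$) modulo $\Q(z)$-linear combinations. The simplest counterexample in case (E) is $f_1=\sin z$, $f_2=\cos z$, where $t=1$ (since $f_1^2+f_2^2=1$) yet neither $f_1$ nor $f_2$ satisfies a \emph{first}-order linear ODE over $\Q(z)$; any linear system containing $\sin z$ in its solution vector has size at least $2$. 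The smallest system has $m=2>t=1$, so Theorem~\ref{thm:LGN} only yields the exponent $\deg(P)^2$, not $\deg(P)^1$. In fact the paper explicitly calls out this phenomenon in the remark following Theorem~\ref{thm:measure_system}: enlarging a collection of algebraically independent functions to a full system introduces extra functions that may be algebraically dependent on the original ones, which is exactly why Assumption~(i) of Theorem~\ref{thm:LGN} cannot be circumvented by a change of basis. Your approach is a reduction to a theorem whose hypotheses your reduced data do not satisfy.

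There is also a secondary difficulty you partially acknowledge but do not resolve: the non-vanishing of the iterated resultant. Taking $R_j$ to be the minimal polynomial of $\beta_j$ over $K=\Q(\beta_1,\ldots,\beta_t)$ handles the first elimination (the other roots are $K$-conjugates of $\beta_j$, and $\beta_1,\ldots,\beta_{m-1}$ are fixed only if they all lie in $K(\beta_j)$). But at the second elimination step you need the previous resultant, which already mixes $\beta_m$-conjugates, to avoid vanishing when $X_{m-1}$ is replaced by the \emph{other} conjugates of $\beta_{m-1}$ over $K$; the Galois argument you sketch no longer applies cleanly because $\beta_{t+1},\ldots,\beta_{m-2}$ need not be fixed by the relevant automorphisms. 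This can be repaired (e.g.\ by passing to the Galois closure of $L/K$ and using a genuine norm $N_{L/K}$), but it requires more care than ``careful choice of $R_j$''.

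For what it is worth, the paper's proof takes a structurally different route that avoids any reduction in the size of the system. In case (E) it runs Siegel's method directly in the $p=\dim\mathbb K[\X]_{\le\delta d}$ variables associated with all monomials in $f_1,\ldots,f_m$, constructing three families of linear forms: forms $L_1,\ldots,L_q$ coming from the functional relations in $\I_z(\f)$ (which vanish at the evaluated point), forms $M_1,\ldots,M_w$ coming from a Pad\'e approximant and Shidlovskii's lemma (which are small), and forms $N_1,\ldots,N_u$ coming from the ideal $\J=\I_1(\f)+(P)$ (which are controlled by $|P(\f(1))|$); a determinant estimate then yields the bound, with the exponent $\deg(P)^t$ produced by Hilbert--Serre counting of $u=\dim\J_{\le\delta d}-\dim\I_1(\f)_{\le\delta d}$. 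In case (M$_q$) it adapts the Nesterenko--Nishioka elimination-theoretic argument, replacing the Multiplicity Lemma and the arithmetic B\'ezout step to account for the ideal $\mathfrak p$ of relations at $\omeg$. Neither of these steps passes through a smaller system, which is why the paper can handle the general situation your proposal cannot reach.
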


\begin{rem}
Here are a few comments regarding Theorem~\ref{thm:measure_system}. 

\medskip

\begin{itemize}

\item When $t=m$, it has already been noted that $f_1(\alpha),\ldots,f_m(\alpha)$ are algebraically independent, and thus $P(f_1(\alpha),\ldots,f_m(\alpha))\not=0$.  
In this case, Theorem~\ref{thm:LGN} is recovered as a special case. Moreover, if $r<m$ is such that $f_1(\alpha),\ldots,f_r(\alpha)$ are algebraically independent, then 
it follows that the inequality $\vert P(f_1(\alpha),\ldots,f_r(\alpha)) \vert \geq C_1H(P)^{-C_2\deg(P)^t}$ holds  for all polynomials $P\in\mathbb Z[X_1,\ldots,X_r]$.

\medskip

\item Assumption \ref{Assump_i} poses a significant limitation when dealing with the values of general $E$- or $M$-functions. For instance, consider a case where we are interested in a set of algebraically independent $E$-functions, say $f_1,\ldots,f_r$,  While it is always possible to construct a linear differential system that relates these functions, this system will typically involve additional functions, resulting in a larger size, say $m>r$.  Consequently, new $E$-functions, denoted $g_1,\ldots,g_{m-r}$, may emerge. However, there is no inherent reason to expect that the combined set of functions $f_1,\ldots,f_r,g_1,\ldots,g_{m-r}$ will remain algebraically independent. 
As a result, Theorem~\ref{thm:LGN} may no longer be applicable in such cases.

\medskip

\item In Case (E), our proof follows the classical approach developed by Siegel and Shidlovskii, which is also the one used by Lang in \cite{La62}. In Case (M$_q$), our proof relies on the classical framework introduced by Nesterenko and further developed by Becker \cite{Bec91} and Nishioka \cite{Ni91,Ni_Liv}. It should be noted that no new tools are introduced here, and Theorem~\ref{thm:measure_system} could have been established much earlier using the existing techniques.

\medskip

\item Bounds for the values of $C_1$ and $C_2$, as well as  their effectivity, are discussed in Sections~\ref{sec:E_func_measures} and \ref{sec:M_func}.

\medskip

\item Let us conclude with a final remark concerning  Case (E) of Theorem~\ref{thm:measure_system}.  When $P$ is a linear form and $P(f_1(\alpha),\ldots,f_m(\alpha))\not=0$, Theorem~\ref{thm:measure_system} provides a measure of linear independence. Until recently, such measures were known to exist--thanks to Shidlovskii--only for values of linearly independent 
$E$-functions with coefficients in $\mathbb K$, evaluated at points in $\mathbb K$, in the two cases where $\mathbb K$ is either the field of rational numbers or an imaginary quadratic  field. However, Fischler and Rivoal~\cite{FR23} have recently succeeded in lifting this restriction by employing a Galois-theoretic argument that relies on Beuker's Lifting Theorem~\cite{Be06}. In contrast, we obtain a linear independence measure valid for any number field $\mathbb K$ by utilizing techniques that have been available since the late 1950s.

\end{itemize}
\end{rem}

To complete our program, we now turn to removing Assumptions \ref{Assump_ii} and \ref{Assump_iii} from Theorem~\ref{thm:LGN}, which is accomplished with the following result.

\begin{thm}
	\label{thm:measure_sans_system} Let $r\geq 1$ be an integer,  $f_1,\ldots,f_r \in \Q[[z]]$ and $\alpha \in \Q\setminus\{0\}$.  Then, in the two following cases,  there exist two positive real numbers $C_1$ and $C_2$ such that  the 
		 following alternative holds for all polynomial $P\in\mathbb Z[X_1,\ldots,X_m]$:   
	\begin{eqnarray*}	
	\text{ either } & P(f_1(\alpha),\ldots,f_m(\alpha))=0 \\
\text{ or } & \vert P(f_1(\alpha),\ldots,f_m(\alpha)) \vert \geq C_1H(P)^{-C_2\deg(P)^\tau} \,.
	\end{eqnarray*}  
	\begin{itemize}
		\item[\rm (E)] The functions  $f_1,\ldots,f_r$ are $E$-functions and 
		$$\tau = {\rm tr.deg}_{\Q(z)}(f_i^{(\ell)} \,:\, 1\leq i\leq r, \; \ell \geq 0)\,.$$
		
		\item[\rm (M$_q$)] The functions $f_1,\ldots,f_r$ are $M_q$-functions for some $q\in \mathbb N_{\geq 2}$, which are well-defined at $\alpha$, $\vert \alpha \vert < 1$, and 
		$$\tau = {\rm tr.deg}_{\Q(z)}(f_i(z^{q^\ell}) \,:\, 1\leq i\leq r,\; \ell \geq 0)\,.$$	
	\end{itemize}
		In each case, $C_1$ depends on $\deg(P)$, $f_1,\ldots,f_r$ and $\alpha$, while $C_2$ depends on $f_1,\ldots,f_r$ and $\alpha$ but not on $\deg(P)$. 
	\end{thm}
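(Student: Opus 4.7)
The plan is to reduce Theorem~\ref{thm:measure_sans_system} to Theorem~\ref{thm:measure_system} by embedding the given tuple $(f_1,\ldots,f_r)$ into a larger tuple $Y$ of size $m\geq r$ that \emph{does} sit inside a linear differential system in case (E), resp.\ a Mahler system in case (M$_q$), for which $\alpha$ is a regular point. Any $P\in\Z[X_1,\ldots,X_r]$ can be viewed as an element of $\Z[X_1,\ldots,X_m]$ with the same total degree and the same height, so Theorem~\ref{thm:measure_system} applied to $Y$ immediately yields the desired bound, provided the transcendence degree $t$ of the entries of $Y$ over $\Q(z)$ coincides with $\tau$.

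\textbf{Case (E).} For each $i$, let $n_i$ be the order of the minimal differential operator over $\Q(z)$ annihilating $f_i$, and set
$$
Y=\bigl(f_1,f_1',\ldots,f_1^{(n_1-1)},\ldots,f_r,\ldots,f_r^{(n_r-1)}\bigr)^{\!T}
$$
of size $m=n_1+\cdots+n_r$. Since the class of $E$-functions is stable under differentiation, every entry of $Y$ is an $E$-function, and the minimal relations yield a linear differential system $Y'=AY$ with $A\in\mathcal{M}_m(\Q(z))$. By André's theorem, the only finite singularity of the minimal operator of an $E$-function is $0$, so $A$ has no pole at $\alpha$ and $\alpha$ is automatically a regular point. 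The $\Q(z)$-vector space spanned by the entries of $Y$ is stable under $d/dz$, hence every $f_i^{(\ell)}$ with $\ell\geq n_i$ is a $\Q(z)$-linear combination of entries of $Y$; consequently $t=\tau$. The $E$-function case of Theorem~\ref{thm:measure_system} applied to $Y$ gives the conclusion.

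\textbf{Case (M$_q$).} The blueprint is analogous, with iterates $z\mapsto z^q$ replacing derivatives. For each $i$, choose a minimal Mahler relation $\sum_{\ell=0}^{n_i}a_{i,\ell}(z)\,f_i(z^{q^\ell})=0$ with $a_{i,0}\,a_{i,n_i}\neq 0$, and let $Y$ be the block tuple formed by the functions $f_i(z^{q^\ell})$ for $0\leq\ell<n_i$, so $m=n_1+\cdots+n_r$. Each entry of $Y$ is an $M_q$-function (a substitution $z\mapsto z^{q^\ell}$ in the minimal relation produces the requisite equation), and one obtains a system $Y(z^q)=A(z)Y(z)$ with $A\in\Gl_m(\Q(z))$: invertibility is forced by the non-vanishing of $a_{i,n_i}$. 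As before, $t=\tau$ because the higher iterates are $\Q(z)$-linear combinations of entries of $Y$. The remaining issue is regularity of $\alpha$: one needs $A$ and $A^{-1}$ to have no poles at any point of the orbit $(\alpha^{q^k})_{k\geq 0}$. Since $|\alpha|<1$, this orbit accumulates at $0$, so only finitely many of its members can be problematic, and a gauge transformation $Y\mapsto C(z)Y$ with $C\in\Gl_m(\Q(z))$ can be chosen to clear them; this replaces $P$ by a polynomial of controlled degree and height in the new coordinates and preserves the alternative. The $M_q$-function case of Theorem~\ref{thm:measure_system} then concludes.

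The main obstacle is precisely this desingularization step in case (M$_q$): unlike in case (E), where André's theorem bypasses the issue for free, in the Mahler setting one must actively build the gauge transformation that makes $\alpha$ regular for the concatenated system while keeping everything over $\Q(z)$ and preserving $\tau$. Once the ambient system has been produced, the arithmetic content is entirely supplied by Theorem~\ref{thm:measure_system}, and the rest of the reduction—identification $t=\tau$, transfer of $P$ from $\Z[X_1,\ldots,X_r]$ to $\Z[X_1,\ldots,X_m]$, tracking of $\deg(P)$ and $H(P)$—is formal.
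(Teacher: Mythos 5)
Your overall strategy---embed $(f_1,\ldots,f_r)$ into a larger tuple carrying a linear system for which $\alpha$ is regular, then invoke Theorem~\ref{thm:measure_system}---is exactly the paper's, but both cases contain errors or gaps.

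In Case (E), you attribute to Andr\'e the statement that the \emph{minimal} differential operator over $\Q(z)$ annihilating an $E$-function has no finite singularity outside $0$. This is false. For the $E$-function $(z-1)e^z$ the minimal operator is $(z-1)\partial-z$, which is singular at $z=1$, even though $(\partial-1)^2$ also annihilates it and has no finite singularity. Proposition~\ref{prop:good_eq}(E) (Andr\'e's theorem \cite{An00}) guarantees only the \emph{existence} of some, generally non-minimal, annihilating operator whose singularities are confined to $\{0,\infty\}$. Taking $n_i$ to be the order of that operator rather than of the minimal one repairs the argument and matches the paper's proof: the leading coefficient is then a power of $z$, so $\alpha\neq 0$ is automatically regular for the companion system, and $t=\tau$ still holds because the span of $f_i,f_i',\ldots,f_i^{(n_i-1)}$ remains stable under $\partial$.

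In Case (M$_q$), you correctly identify the obstacle---regularity of $\alpha$ for the concatenated Mahler system---but leave it unresolved, and your proposed route is not the paper's. The paper does not desingularize by a gauge transformation; it invokes Proposition~\ref{prop:good_eq}(M$_q$), a recent result from \cite{AF24_EM}, which produces for each $f_i$ a $q^\ell$-Mahler equation (for a suitably large $\ell$) that is \emph{already} regular at $\alpha$. Your sketch of a change of basis $Y\mapsto C(z)Y$ with $C\in{\rm GL}_m(\Q(z))$ raises problems you do not address: the new coordinates must be shown to remain $M_q$-power series and not merely meromorphic germs; after the change of coordinates, $P(f_1(\alpha),\ldots,f_r(\alpha))$ is no longer a $\Z$-polynomial in the new coordinates, and controlling the degree and height of the rewritten polynomial is not automatic; and you omit the passage from $q$ to a power $q^\ell$, which is needed because $f_i$, though defined at $\alpha$, may have poles inside $\{|z|<|\alpha|\}$ and hence fail to be defined at some $\alpha^{q^k}$ with $k\geq 1$. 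As written this step is a genuine gap; the essential non-formal input here is the good-operator result of \cite{AF24_EM}, not just Theorem~\ref{thm:measure_system}.
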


\begin{rem}
 If for every $i$, $1\leq i\leq r$, we let  $m_i$ denote the order of the minimal linear differential/Mahler equation satisfied by $f_i$, then we have the inequality 
\begin{equation*}
\tau \leq m_1+\cdots +m_r \,.
\end{equation*} 
\end{rem}

The key to deriving Theorem~\ref{thm:measure_sans_system} from Theorem~\ref{thm:measure_system} lies in the existence of certain \emph{good operators} associated with arbitrary $E$- or $M$-functions. Specifically, all we require is that for any $\alpha$ and any $f$, there exists a linear differential/Mahler operator that is regular at $\alpha$. 
When $f$ is an $E$-function, the existence of such a differential operator is guaranteed by a result of André \cite{An00}, which states that $f$ is annihilated by a differential operator whose singularities are confined to $\{0, \infty\}$. For $M$-functions, the existence of such a Mahler operator was recently established by the authors \cite{AF24_EM}.

We conclude this introduction by presenting a consequence of Theorem~\ref{thm:measure_sans_system}, which highlights the relevance of our approach.
Recall that a Liouville number is an irrational real number $\xi$ for which, for all $w \geq 1$, there exists a rational number $p/q \in \mathbb{Q}$ such that 
$$
\left\vert \xi - \frac{p}{q}\right\vert \leq \frac{1}{q^w}\,\cdot
$$
Let $\mathbf{E}$ denote the set of values of $E$-functions at non-zero algebraic points, and similarly, let $\mathbf{M}$ denote the set of values of $M$-functions 
at non-zero algebraic points (where the evaluation is well-defined). The question of whether either of these two sets can contain Liouville numbers has been an old problem, 
with roots in the pioneering works of Siegel and Mahler (see the discussion in Section~\ref{sec:history}). 
For the set $\mathbf{E}$, this problem was fully resolved only very recently by Fischler and Rivoal~\cite{FR23}, who employed a desingularization procedure due to Beukers, along with a Galois-theoretic argument. Both methods are based on Beuker's Lifting Theorem~\cite{Be06}. 
For the set $\mathbf{M}$, the result was announced by Zorin~\cite{Zo13} in an unpublished preprint. 
As a direct consequence of Theorem~\ref{thm:measure_sans_system}, we obtain the following result, which provides a new proof of Fischler and Rivoal's result, one that does not rely on Beukers' Lifting Theorem, and also confirms Zorin's claim.

\begin{coro}{\, }
	\label{coro:Lnumbers}
No $\xi\in \mathbf E\cup \mathbf M$ is a Liouville number. 
\end{coro}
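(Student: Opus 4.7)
The plan is to apply Theorem \ref{thm:measure_sans_system} with $r = 1$ to the family of degree-one polynomials $P(X) = bX - a$ with $a \in \Z$ and $b \in \N_{\geq 1}$. Given $\xi \in \mathbf{E} \cup \mathbf{M}$, write $\xi = f(\alpha)$ with $f$ an $E$- or $M_q$-function and $\alpha \in \Q \setminus \{0\}$; in the Mahler setting one has $\vert \alpha \vert < 1$, which is forced by the well-definedness requirement in the definition of $\mathbf{M}$. If $\xi$ happens to be algebraic, the classical Liouville inequality \eqref{eq:liouville} applied to the same polynomials $P$ already rules out $\xi$ being a Liouville number, so we may assume $\xi$ is transcendental, and in particular irrational.

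Since $r = 1$, the transcendence degree $\tau$ appearing in Theorem \ref{thm:measure_sans_system} is finite, bounded by the order of the minimal linear differential, resp.\ Mahler, operator annihilating $f$. Applying the theorem to $f_1 = f$ and to the family $P$ above (whose degree equals $1$) yields positive constants $C_1$ and $C_2$, depending only on $f$ and $\alpha$, such that for every admissible pair $(a,b)$ either $P(\xi) = 0$ or
\begin{equation*}
\vert P(\xi) \vert \, \geq \, C_1 H(P)^{-C_2 \deg(P)^\tau} \, = \, C_1 H(P)^{-C_2}\,.
\end{equation*}
The irrationality of $\xi$ forbids $P(\xi) = b\xi - a$ from vanishing, hence the lower bound applies unconditionally.

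It then remains to translate this into a bound on $\vert \xi - a/b \vert$. Restricting attention to approximations with $\vert a/b - \xi\vert \leq 1$, one has $\vert a\vert \leq b(\vert \xi\vert + 1)$ and therefore $H(P) = \max(\vert a\vert, b) \leq (\vert\xi\vert + 1)\, b$. Substituting into the preceding inequality yields
\begin{equation*}
\left\vert \xi - \frac{a}{b} \right\vert \, = \, \frac{\vert P(\xi) \vert}{b} \, \geq \, \frac{C_1}{b\, H(P)^{C_2}} \, \geq \, \frac{C_3}{b^{C_2 + 1}}
\end{equation*}
for a positive constant $C_3$ depending only on $f$, $\alpha$, $\xi$. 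This uniform finite irrationality exponent for $\xi$ contradicts the definition of a Liouville number, and the corollary follows.

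The argument is essentially a direct specialization of Theorem \ref{thm:measure_sans_system}, and no new ingredient is required beyond what has been established earlier in the paper. The only subtle point is the bookkeeping of constants, and specifically the fact that $C_1$ can be chosen independent of the pair $(a,b)$ once the degree of $P$ is fixed at $1$; but this is exactly the dependence on $\deg(P)$ already spelt out in the statement of the theorem.
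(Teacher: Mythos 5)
Your proof is correct and follows essentially the same route as the paper: both apply Theorem~\ref{thm:measure_sans_system} with $r=1$ to degree-one polynomials (the paper phrases this via the exponent $w_1(\xi)$ in Mahler's classification, whereas you unwind the inequality explicitly into a bound on $\vert \xi - a/b\vert$, and you handle the trivial algebraic case via the classical Liouville inequality). The only cosmetic difference is that the paper obtains $w_d(\xi)<\infty$ for every $d$ and then specializes to $d=1$, thereby also recording the stronger conclusion that $\xi$ cannot be a $U$-number.
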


In fact, from Theorem \ref{thm:measure_sans_system}, we deduce a more general result: no $\xi \in \mathbf{E} \cup \mathbf{M}$ is a $U$-number in Mahler's classification (cf.\ Section~\ref{sec:def} for more details).

\subsection{Organization of the Paper} The paper is structured as follows. In Section~\ref{sec:def}, we recall the main  definitions related to $E$- and $M$-functions, along with Mahler's classification. In Section~\ref{sec:E_funct},  we prove Case (E) of Theorem~\ref{thm:measure_system}, while Section~\ref{sec:M_func} addresses Case (M$_q$). Theorem~\ref{thm:measure_sans_system} is established in Section~\ref{sec:remove_sing}. Finally, Section~\ref{sec:history} presents a brief overview of previous results relevant to this work.

\section{Main definitions}\label{sec:def}

This section aims to recall the main definitions related to $E$- and $M$-functions, as well as Mahler's classification.

\subsection{$E$-functions} 

An $E$-function is a power series of the form $$f(z)=\sum_{n=0}^\infty \frac{a_n}{n!}z^n$$ that satisfies a linear differential equation with coefficients in $\Q[z]$ 
and whose arithmetic growth of coefficients is constrained by the following two conditions:
there exist $C>0$ and a sequence of integers $d_n\geq 1$ such that for all $\sigma \in  {\rm Gal}(\Q/\mathbb Q)$ and all integers $n\geq 1$, 
we have $\vert \sigma(a_n)\vert \leq C^n$, $d_n\leq C^n$, and $d_na_i$ is an algebraic integer for every $i$, $1\leq i\leq n$. 

Typical examples of $E$-functions include $e^z$, $\cos z$, $\sin z$, the Bessel function, and, more generally, the hypergeometric functions 
of the following form:
$$
{}_pF_{q}
\left[
\begin{matrix}
a_1, \ldots, a_p
\\
b_1, \ldots, b_q
\end{matrix}
\,; \lambda z^{q-p} \right] := \sum_{n=0}^\infty \frac{(a_1)_n\cdots (a_p)_n}{(b_1)_n\cdots (b_q)_n} \lambda^nz^{n(q-p)}\,,
$$
where $0\leq p <q$, $a_1,\ldots,a_p\in\mathbb Q$, $b_1,\ldots,b_q\in\mathbb Q\setminus \mathbb Z_{\leq 0}$, $\lambda \in \Q$.  Here,  
$(\cdot)_n$ denotes the Pochhammer symbol.

When studying $E$-functions, it is often more convenient to consider linear differential systems  of order $1$, i.e., systems of the form 
\begin{equation}\label{eq:DiffSystem}
{\bf Y}'(z)
=
A(z){\bf Y}(z)\,
,\quad\quad A(z) \in {\mathcal M}_m(\Q(z))\, ,
\end{equation} 
where the system has a vector solution whose coordinates are $E$-functions. Conversely, we recall that for any arbitrary $E$-functions $f_1, \dots, f_r$, there exists 
a vector solution to a system of the form \eqref{eq:DiffSystem} with $m \geq r$, where $f_1, \dots, f_r$ appear as some of its coordinates. 
Additionally, we also recall that a point $\alpha \in \mathbb{C}$ is said to be \emph{regular} with respect to the system \eqref{eq:DiffSystem} if $A(z)$ is well-defined at $\alpha$, i.e., if $\alpha$ is not a pole of any of the coordinates of $A(z)$. Otherwise, 
$\alpha$ is said to be \emph{singular}.

\subsection{$M$-functions}

Let $q \geq 2$ be an integer. An $M_q$-function is a power series $f(z) \in \Q[[z]]$ that satisfies an equation of the form
$$
a_0(z)f(z)+a_1(z)f(z^q)+\cdots + a_m(z)f(z^{q^m})=0\,,
$$
where $a_0, \dots, a_m \in \Q[z]$ and $a_0 a_m \neq 0$. We refer to a power series as an $M$-function when it is an $M_q$-function for some $q \geq 2$, where the value of $q$ 
is not necessarily specified.

Typical examples of $M$-functions include:  
$$\sum_{n=0}^{\infty} z^{3^n} \,, \quad \prod_{n=0}^\infty \frac{1}{1-z^{5^n}} \,, \quad \sum_{n=0}^\infty \lfloor \log_2 n \rfloor z^n\,,$$
and, most importantly, the generating series of automatic sequences (see, for  instance, \cite{ABS}, for more examples). 

When studying $M_q$-functions, it is often more convenient to consider linear Mahler systems of order $1$, i.e., systems of the form 
\begin{equation}\label{eq:MSystem}
{\bf Y}(z^q)
=
A(z){\bf Y}(z)\,
,\quad\quad A(z) \in {\rm GL}_m(\Q(z))\, ,
\end{equation} 
 which have a vector solution whose coordinates are $M_q$-functions. Conversely, we recall that for any arbitrary $M_q$-functions $f_1, \dots, f_r$, 
 there exists a vector solution to a system of the form \eqref{eq:MSystem} with $m \geq r$, where $f_1, \dots, f_r$ appear as some of its coordinates. 
We also recall that a point $\alpha \in \mathbb{C}$ is said to be \emph{regular} with respect to the system \eqref{eq:MSystem} if $A(z)$ is well-defined and invertible at $\alpha^{q^n}$ for every non-negative integer $n$, and \emph{singular} otherwise.

Unlike $E$-functions, an $M$-function is not an entire function unless it is a polynomial. An $M$-function is always analytic in some neighborhood of the origin and can be meromorphically continued within the open unit disk. Furthermore, the unit circle is a natural boundary, unless the function is rational. This is why the additional  assumption 
$\vert \alpha \vert < 1$ is made in all our results concerning $M$-functions.

\subsection{Mahler's classification}

When a number $\xi$ is proven to be transcendental, it is natural to ask whether it could be a Liouville number or, more generally, 
to classify it within Mahler's framework, which we now outline.

Given a complex number $\xi$ and a positive integer $d$, let $w_d(\xi)$ denote the supremum of the real numbers $w$ such that the inequality 
$$
0<\vert P(\xi) \vert <\frac{1}{H(P)^w} \,,
$$
holds for infinitely many polynomials $P \in \Z[X]$ with degree at most $d$. 
It follows that $\xi$ is a Liouville number if and only if $w_1(\xi)=+\infty$. 
We then define
$$
w(\xi):=\limsup \frac{w_d(\xi)}{d}\,\cdot 
$$
The complex numbers can be classified into the following four classes: 
\begin{itemize}
	\item $\xi$ is an $A$-number if $w=0$.
	\item $\xi$ is an $S$-number if $0 < w < \infty$.
	\item $\xi$ is a $T$-number if $w=\infty$ and $w_d(\xi)<\infty$ for all $d\geq 1$.
	\item $\xi$ is a $U$-number if $w_d(\xi)=\infty$ for some $d\geq 1$. 
\end{itemize}
$A$-numbers are precisely the algebraic numbers, while $U$-numbers generalize Liouville numbers. Almost all numbers (in the sense of Lebesgue measure) are $S$-numbers. Distinguishing between $S$- and $T$-numbers is notoriously difficult in transcendental number theory.

Given this classification, the following general and basic heuristic emerges once the transcendence of a complex number, say $\xi$, has been proven:  
$\xi$ is likely to be an $S$-number, and thus not a Liouville number nor a 
$U$-number. Of course, this conjecture is only made if there is no evidence to the contrary, which is indeed the case for elements of the sets 
$\bf E$ and $\bf M$.

\begin{conj}\label{conj}
All transcendental elements in ${\bf E}\cup {\bf M}$ are $S$-numbers. 
\end{conj}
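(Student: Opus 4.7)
The conjecture predicts that every transcendental $\xi \in \mathbf{E} \cup \mathbf{M}$ is an $S$-number, that is, $\limsup_{d\to\infty} w_d(\xi)/d < \infty$. Applying Theorem~\ref{thm:measure_sans_system} to a univariate polynomial $P \in \mathbb{Z}[X]$ of degree $d$ and height $H$ already yields $w_d(\xi) \le C_2 d^{\tau}$, which rules out Liouville numbers and more generally $U$-numbers; this is precisely how Corollary~\ref{coro:Lnumbers} is obtained. However, unless $\tau = 1$ this bound falls far short of the $O(d)$ growth required for the $S$-number property, and for a generic $E$- or $M$-function the exponent $\tau$ is close to the order of a minimal annihilating operator, so one cannot conclude directly from the measures established in this paper.

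The natural plan is to first reduce the problem to the multivariate setting of Theorem~\ref{thm:measure_system}. Writing $\xi = f_1(\alpha)$ and extending to a vector $(f_1,\ldots,f_m)$ solution of a differential or Mahler system regular at $\alpha$, one converts a univariate $P$ of degree $d$ into the multivariate polynomial $Q(X_1,\ldots,X_m) := P(X_1)$ of the same total degree and height. Theorem~\ref{thm:measure_system} then gives $|P(\xi)| \ge C_1 H^{-C_2 d^{t}}$, where $t$ is the transcendence degree of $f_1,\ldots,f_m$ over $\mathbb{Q}(z)$. This trivial embedding still leaves the exponent $d^t$, which is the natural output of the Nesterenko--Shidlovskii framework and is not improved by the passage $P\mapsto Q$.

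The crux, and the main obstacle, is therefore to replace the factor $d^t$ by $d$. A promising route would be to fix a transcendence basis $f_{i_1}(\alpha),\ldots,f_{i_t}(\alpha)$ of the field $\mathbb{Q}(f_1(\alpha),\ldots,f_m(\alpha))$ so that $\xi$ is algebraic of bounded degree $D$ (independent of $P$) over $\mathbb{Q}(f_{i_1}(\alpha),\ldots,f_{i_t}(\alpha))$, and then to translate an approximation of $\xi$ by an algebraic number of degree $d$ into simultaneous approximations in $t$ variables, to which a Philippon-type criterion of algebraic independence might be applied to extract an estimate linear in $d$. The structural difficulty is that current methods, being based on a single auxiliary polynomial combined with a zero estimate, inevitably produce an exponent growing like $d^t$; removing this dependence for arbitrary $E$- or $M$-functions seems to require a genuinely new idea, perhaps a Pad\'e-type construction more finely tailored to the specific function $f$, or a Galois-theoretic argument in the spirit of~\cite{FR23} that leverages the full action of the differential or difference Galois group on $\xi$. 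For these reasons I expect Conjecture~\ref{conj} to remain open beyond what the present techniques can deliver.
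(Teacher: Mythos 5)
This statement is a \emph{conjecture}, not a theorem, and the paper does not prove it; your analysis is essentially in agreement with the authors' own discussion. You correctly identify that Theorem~\ref{thm:measure_sans_system} yields $w_d(\xi) \le C_2 d^{\tau}$, which excludes $U$-numbers (hence Liouville numbers, giving Corollary~\ref{coro:Lnumbers}) but does not exclude $T$-numbers unless the exponent is linear in $d$; and you correctly note that only in the special case $\tau = 1$ does the present framework establish the $S$-number property. This matches the paper exactly: the authors explicitly state in Section~2.3 that ``the last step to establish this conjecture is to prove that no $\xi \in {\bf E}\cup {\bf M}$ is a $T$-number,'' and in Section~6 they observe the case $\tau = 1$ as the only one where the $S$-number conclusion follows. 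Your diagnosis of the structural obstacle --- that zero-estimate-based methods intrinsically produce an exponent growing like $d^{\tau}$ rather than $d$ --- is also a fair reading of the state of the art. So there is nothing to repair here; the proposal is not a proof, and neither the paper nor current techniques supply one.
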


In light of the remark under Corollary \ref{coro:Lnumbers}, the last step to establish this conjecture is to prove that no $\xi \in {\bf E}\cup {\bf M}$ is a $T$-number.

\section{Proof of Case (E) of Theorem \ref{thm:measure_system}}\label{sec:E_funct}

Our proof follows the classical method introduced by Siegel and extended by Shidlovskii. 
For more details, we refer the reader to \cite{FN} and \cite{Sh_Liv}. 
We assume that the reader is familiar with the proof of the Siegel-Shidlovskii theorem and will not provide the full details of all the computations. 
Instead, we focus on the new argument that we introduce.

\medskip

Let us begin by introducing some notations that we will use throughout this section. Let $\f := (f_1(z), \ldots, f_m(z))$ and $d := \deg(P)$, where $P$ is the polynomial in Theorem~\ref{thm:measure_system}. 
Let $\mathbb K$ be a number field containing the coefficients of  $f_1(z),\ldots,f_m(z)$ as well as the point $\alpha$. Let $h:=[\mathbb K:\mathbb Q]$ denote its degree 
and  $\OK$ its ring of integers.  Let $T(z)$ denote 
the least common denominator of the entries of the matrix $A(z)$. 
Multiplying $T(z)$ by a scalar if necessary, we can assume that all the coefficients of $T(z)A(z)$ belong to 
$\OK[z]$.  
When working with $E$-functions, it is convenient to consider the house as a suitable notion of height for an algebraic integer. The house of an algebraic number is defined as the maximum modulus of its Galois conjugates. To facilitate the transition from this note to the existing literature on $E$-functions, we adopt this notion of height rather than the absolute Weil height.  
Throughout this section, the height $H(Q)$ of a polynomial $Q \in \OK[X_1,\ldots,X_m]$ is defined as the maximum of the houses of its coefficients. 
When $Q$ has coefficients in $\Z$, this corresponds to the classical notion of height used in the introduction. 
Along the proof, we will introduce various constants, all of which are positive real numbers:
 $\delta,\delta_i,\,\eta_i$ 
do not depend on $d$, while  $p,q,r,s,u,v,w,n_i,\rho_i$ may depend on  $d$.

\medskip

Without loss of generality, we can assume that $\alpha = 1$ by replacing the functions  $f_1(z), \ldots, f_m(z)$ with $f_1(\alpha z), \ldots, f_m(\alpha z)$ if necessary. 
From now on, we assume that 
\begin{equation}\label{eq:pnot0} 
P(f_1(1), \ldots, f_m(1)) \neq 0, 
\end{equation} and we aim to establish the required lower bound for $|P(f_1(1), \ldots, f_m(1))|$. We note that when $f_1(z), \ldots, f_m(z)$ are all polynomials, the result follows from the Liouville inequality (see \eqref{eq:liouville}). Therefore, we assume that at least one of them is not a polynomial. Since an $E$-function is either a polynomial or transcendental, it follows that $t \geq 1$.

\subsection{Preliminary results on dimensions of some vector spaces}\label{sec:Efunc_dimension}

Given a field $\bf k$ and an integer $\delta\geq 0$, we let ${\bf k}[\X]_{\leq \delta}$ denote the ${\bf k}$-vector space of polynomials 
with total degree at most $\delta$ in $\X=(X_1,\ldots,X_m)$. Given an ideal $\I \subset {\bf k}[\X]$, 
we set $\I_{\leq \delta}:=\I\cap{\bf k}[\X]_{\leq \delta}$ and if $\mathcal A={\bf k}[\X]/\mathcal I$, we set $\mathcal A_{\leq \delta}:={\bf k}[\X]_{\leq \delta}/\I_{\leq \delta}$. 
For every tuple of non-negative integers $\bmu=(\mu_1,\ldots,\mu_m)$, 
we set 
$$
\X^{\bmu}=X_1^{\mu_1}\cdots X_m^{\mu_m} \; \mbox{ and } \; \f^{\bmu}=f_1^{\mu_1}\cdots f_m^{\mu_m} \,.
$$
Given a prime ideal $\I$, we let ${\rm ht}(\mathcal I)$ denote its height, that is the maximum of the integers $h$ such that there exist 
primes ideals $\mathfrak p_0,\ldots,\mathfrak p_h$ satisfying
$$
\mathfrak p_0 \subsetneq \mathfrak p_1 \subsetneq \cdots \subsetneq \mathfrak p_h = \I\,.
$$
The height of a possibly non-prime ideal $\I$ is defined as 
$$
{\rm ht}(\I):=\min\{h(\mathfrak p) : \mathfrak p \mbox{ is prime and } \I\subseteq \mathfrak p\} \,.
$$
Let
$$
\I_z(\f):=\{Q(z,\X) \in \mathbb K(z)[\X] \, :\,  Q(z,\f(z))=0\}
$$
	denote the ideal of algebraic relations over $\mathbb K(z)$ between $f_1(z),\ldots,f_m(z)$.  
	Then $\I_z(\f)$ is a prime ideal whose height is $m-t$ (see \cite[Thm.\ 20,\,p.\,193]{ZS}). Let 
	$$
	\mathcal I_1(\f):=\{Q(\X)\in \mathbb K[\X] \, : \, Q(\f(1))=0\}\,
	$$
	denote the ideal of algebraic relations over $\mathbb K$ between $f_1(1),\ldots,f_m(1)$.
	It is a prime ideal which contains the ideal 
	\begin{equation}\label{eq:ev}
	{\rm ev}_1(\I_z(\f)) := \{Q(1,\X) \in \mathbb K[\X] \, :\,  Q(z,\X)\in \I_z(\f)\cap \mathbb K[z][\X]\} \,,
	\end{equation}
	 form by the valid specializations at $z=1$ of elements of $\I_z({\f})$\footnote{According to Beukers' Lifting Theorem \cite[Thm.\ 1.3]{Be06}, 
	 we actually know that 
	${\rm ev}_1(\I_z(\f))=\mathcal I_1(\f)$, but we want to emphasize that this result is not needed for our proof.}. 
	The height of $\I_1(\f)$ is greater than or equal to $m-t$\footnote{The Siegel-Shidlovskii Theorem implies that the height of $\I(\f(1))$ is actually 
	equal to $m-t$, but, again, we want to emphasize that we do not need to use this stronger fact.}. 
	We also set 
	\begin{equation}\label{eq:defJ}
	\J:=\I_1(\f)+(P) \subset \mathbb K[\X]\,.
	\end{equation}
	 The ideal $\J$ is not necessarily prime, but since $\I_1(\f)$ is prime and 
	(according to \eqref{eq:pnot0}) 
	$P\not\in\I_1(\f)$,  the height of $\J$ is at least $m-t+1$.
	
	\medskip Let $\delta\geq 1$ be an integer. Set 
	\begin{eqnarray}
	\nonumber p:=\dim_{ \mathbb K}  \mathbb K[\X]_{\leq \delta d}, &  q:=\dim_{ \mathbb K(z)} \I_z(\f)_{\leq \delta d}, & r:=\dim_{ \mathbb K} \I_1(\f)_{\leq \delta d}\\
	s:=\dim_{ \mathbb K} \J_{\leq \delta d},& u:=s-r,&v:=p-s, \\ 
	\nonumber w:=p-q\,.
	\end{eqnarray}

The aim of this section is to prove the following result.

\begin{lem}\label{lem:7-8}
There exist a positive integer $\delta_1$ and a positive real number $\eta_1$, such that for every $\delta\geq \delta_1$, we have 
\begin{equation*}
vh<w\,,   \mbox{ and }\, u\leq  \eta_1 \delta^{t} d^{t}\,.
\end{equation*}
\end{lem}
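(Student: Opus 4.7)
The plan is to reduce both claims to standard Hilbert-function estimates applied to the three quotient rings $\mathbb K[\X]/\I_1(\f)$, $\mathbb K[\X]/\J$, and $\mathbb K(z)[\X]/\I_z(\f)$. By the height computations already recalled in the excerpt, their Krull dimensions are respectively at most $t$, at most $t-1$, and exactly $t$. The underlying fact I shall repeatedly use is that if $\mathfrak a \subseteq \mathbb F[\X]$ is an ideal over a field $\mathbb F$ whose quotient has Krull dimension $k$, then $\dim_\mathbb F (\mathbb F[\X]/\mathfrak a)_{\leq N}$ is bounded above by a constant multiple of $\deg V(\mathfrak a)\, N^k$, and, if the Krull dimension is \emph{exactly} $k$, it grows asymptotically like $(\deg \overline{V(\mathfrak a)}/k!)\, N^k$ as $N\to\infty$.

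I would first dispatch the bound $u \leq \eta_1 \delta^t d^t$. Since $\I_1(\f) \subseteq \J$, one has $u = s - r \leq p - r = \dim_\mathbb K (\mathbb K[\X]/\I_1(\f))_{\leq \delta d}$, and the upper Hilbert-function bound applied to the quotient of Krull dimension $\leq t$ immediately gives $u \leq \eta_1 (\delta d)^t = \eta_1 \delta^t d^t$, where $\eta_1$ depends only on $\f$.

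For the inequality $vh < w$, I would bound $v$ from above and $w$ from below. Since $\mathbb K(z)[\X]/\I_z(\f)$ has Krull dimension exactly $t$, the asymptotic lower bound yields constants $c(\f) > 0$ and $N_0 = N_0(\f)$ such that $w \geq c(\f)(\delta d)^t$ as soon as $\delta d \geq N_0$; since $d \geq 1$, this holds whenever $\delta \geq N_0$. For $v$, the ideal $\J = \I_1(\f) + (P)$ has $P$ acting as a non-zero-divisor on the integral domain $\mathbb K[\X]/\I_1(\f)$, and Bezout's theorem gives $\deg V(\J) \leq \deg V(\I_1(\f)) \cdot \deg(P) \leq C(\f)\, d$. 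Combined with the Hilbert-function upper bound for an ideal of Krull dimension $\leq t-1$, this produces $v \leq C'(\f) \cdot d \cdot (\delta d)^{t-1} = C'(\f)\, \delta^{t-1} d^t$.

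Putting it together, $vh < w$ reduces to $C'(\f) h \delta^{t-1} d^t < c(\f) \delta^t d^t$, i.e.\ to $\delta > C'(\f) h / c(\f)$; taking $\delta_1 := \max\bigl(N_0,\, \lceil C'(\f) h / c(\f) \rceil + 1\bigr)$, which depends on $\f$ and $h = [\mathbb K : \mathbb Q]$ but not on $d$, settles the first assertion. I expect the main point requiring care to be the Bezout step, which is what enforces the linear-in-$d$ control on the constant bounding $v$ and thereby allows $\delta_1$ to be chosen independently of $d$; by contrast, the fact that $w$ is computed over $\mathbb K(z)$ rather than $\mathbb K$ costs nothing, as Hilbert-function theory is insensitive to the choice of base field.
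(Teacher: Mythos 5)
Your estimate of $u$ and your lower bound on $w$ are fine and match the paper: both follow from the qualitative Hilbert--Serre theorem applied to the \emph{fixed} ideals $\I_1(\f)$ and $\I_z(\f)$, whose Hilbert functions eventually agree with polynomials of degree $\leq t$ and $=t$, with a threshold independent of $d$.

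The bound on $v$, however, has a genuine gap. You invoke a ``standard Hilbert-function upper bound'' of the form $\dim_{\mathbb F}(\mathbb F[\X]/\mathfrak a)_{\leq N}\leq C\cdot \deg V(\mathfrak a)\cdot N^{k}$, with $C$ uniform, and apply it to the $d$-dependent family of ideals $\J=\I_1(\f)+(P)$. Such a bound is \emph{false} for arbitrary ideals: taking $\mathfrak a=X_1\,(X_1,\ldots,X_m)^{D-1}$, one has $\dim(\mathbb F[\X]/\mathfrak a)=m-1$ and $\deg V(\mathfrak a)=1$, yet at $N=D-1$ the Hilbert function is of order $D^{m}$, which for $D$ large exceeds any fixed multiple of $N^{m-1}=D^{m-1}$. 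The obstruction is embedded primary components, whose multiplicity is invisible to $\deg V(\mathfrak a)$. Bounds of this kind \emph{do} hold for unmixed ideals (and appear in the elimination-theory literature, e.g.\ in the Nesterenko--Philippon volume), but $\J$ is not automatically unmixed: Krull's principal ideal theorem controls the minimal primes of $\overline P\mathcal A$, not its embedded ones, and this would require $\mathcal A=\mathbb K[\X]/\I_1(\f)$ to be Cohen--Macaulay, which is not given. So your Bezout step controls the wrong invariant, and the Hilbert-function step is unjustified as stated.

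The paper sidesteps all of this by never estimating the Hilbert function of $\J$ directly. Instead, since $\overline P$ is a non-zero-divisor on the domain $\mathcal A$ and $\deg P\leq d$, multiplication by $\overline P$ gives an \emph{injection} $\mathcal A_{\leq(\delta-1)d}\hookrightarrow\J_{\leq\delta d}/\I_1(\f)_{\leq\delta d}$, so $u\geq\varphi((\delta-1)d)$ where $\varphi$ is the Hilbert polynomial of the fixed ideal $\I_1(\f)$; hence $v=(p-r)-u\leq\varphi(\delta d)-\varphi((\delta-1)d)$. This finite difference has degree $t_0-1<t$ in $\delta$, which is exactly the needed gain, and it requires only the qualitative Hilbert--Serre theorem for $\I_1(\f)$ together with the non-zero-divisor property you already noticed but did not exploit in this way. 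You should replace the Bezout-plus-Hilbert-function route by this exact-sequence argument.
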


\begin{proof}
By  the Hilbert-Serre Theorem (see, for instance,  \cite[p.\,231]{FN}),  if $\delta$ is large enough, say $\delta\geq \gamma_1$, 
$w$ is a polynomial of degree $t$ in $\delta d$ and hence there  
 exist two positive real numbers $\mu_1,\mu_2$ such that
\begin{equation}
\label{eq:majo_w}
 \mu_1\delta^{t} d^{t} \leq w\leq \mu_2\delta^{t}d^{t} \,.
\end{equation}

Set $\mathcal A:=\mathbb K[X_1,\ldots,X_m]/\I_1(\f)$. 
Then $\dim_{\mathbb K}\mathcal A_{\leq \delta d}=p-r$. 
Using again the Hilbert-Serre Theorem as in \cite[p.\,231]{FN}, we obtain the existence of a positive integer $\gamma_2$ and a polynomial $\varphi(x)$ 
of degree $t_0\leq t$ such that, for every $\delta \geq \gamma_2-1$, $p-r=\varphi(\delta d)$. 
Let $\overline{P}$ denote the image of $P$ in $\mathcal A$. 
Since 
\begin{equation*}
\mathbb K[X_1,\ldots,X_m]/\J  \cong  \mathcal A/\overline{P}\mathcal A\,,
\end{equation*}
we have $\dim_{\mathbb K}(\overline{P}\mathcal A)_{\leq \delta d}=s-r=u$. The map $Q\to Q\overline{P}$, from $\mathcal A_{\leq (\delta-1)d}$ 
to $(\overline{P}\mathcal A)_{\leq \delta d}$ is injective since $\mathcal A$ is a domain. It is also surjective, by definition. 
Hence when $\delta\geq \gamma_2$, 
we have $u= \varphi((\delta -1)d)$. It follows that 
	$$
	v=p-s=p-r-u= \varphi(\delta d)-\varphi((\delta-1)d)\,.
	$$
	The right-hand side is a polynomial with degree $t_0$ in $d$ and $t_0-1$ in $\delta$. 
	 There thus exists a positive real number $\mu_3$ such that
 \begin{equation}
\label{eq:majo_v}
 v \leq \mu_3 \delta^{t_0-1} d^{t_0}\,,
\end{equation}
as soon as $\delta \geq \gamma_2$.

Since $t_0-1<t$, we deduce from \eqref{eq:majo_w} and \eqref{eq:majo_v} that 
$w>vh$, as soon as $\delta$ is sufficiently large, say $\delta\geq \gamma_3\geq \max\{\gamma_1,\gamma_2\}$.  
On the other hand, we have $u=r-s\leq p-q = w$. According to \eqref{eq:majo_w}, we obtain the expected result by setting $\delta_1:=\gamma_3$ and $\eta_1:=\mu_2$. 
\end{proof}

Until the end of the proof, we fix an integer $\delta\geq \delta_1$, so that the conclusion of Lemma~\ref{lem:7-8} holds.

\subsection{A first basis of linear forms}

Recall that $p=\dim \mathbb K[\X]_{\leq \delta d}$. Let $\Y=(Y_1,\ldots,Y_p)$ be a $p$-tuple of indeterminates and let us consider a bijective linear map
\begin{equation}\label{eq:psi}
\psi : \mathbb K(z)[\X]_{\leq \delta d} \to \mathbb K(z)Y_1+\cdots +\mathbb K(z)Y_p
\end{equation}
sending each monomial $\X^\bmu$ to one of the variables $Y_1,\ldots,Y_p$. 
For every $i$, $1\leq i\leq p$, we set  $g_i(z):=\f(z)^{\bmu}$, where $\bmu$ is defined by $\psi^{-1}(Y_i)=\X^{\bmu}$.  
By definition, the power series $g_1,\ldots,g_p$ span a $\mathbb K(z)$-vector space of dimension $w$. Choosing another bijection $\psi$ if necessary, we can assume 
$g_1,\ldots,g_w$, are linearly independent over $\mathbb K(z)$. 
In this section, our aim is to build a basis of the $\mathbb K$-vector space $\mathbb K Y_1+\cdots +\mathbb K Y_p$, 
which is formed by two families of linear forms with size, respectively, $q$ and $p-q=w$.

\medskip

\begin{itemize}
 \item[{\rm (a)}] The linear forms $L_1(\Y),\ldots,L_q(\Y)$ are obtained by specialization at $z=1$ of some linear relations over $\mathbb K(z)$ between $g_1,\ldots,g_p$. They \emph{vanish} when specializing each $Y_i$ at $g_i(1)$, $1\leq i\leq p$.
 
 \medskip
 
 \item[{\rm (b)}] The linear forms $M_1(\Y),\ldots,M_w(\Y)$ are obtained by derivation, and then specialization at $z=1$, 
 of some Padé approximant associated with  $g_1,\ldots,g_w$. They take \emph{small values} when specializing each $Y_i$ at $g_i(1)$, $1\leq i\leq p$.
\end{itemize}

\begin{rem}
In standard proofs of the Siegel-Shidlovskii theorem, it is customary to reduce to the case where the functions $g_i$ are linearly independent. However, this reduction requires modifying the underlying differential system, which incurs a cost in terms of quantification. In contrast, the approach developed here avoids this reduction entirely. Instead, it leverages  the potential linear relations to construct the linear forms (a) that vanish when specializing each $Y_i$ at $g_i(1)$ for $1\leq i\leq p$. Notably, we apply Siegel's method using 
$p$ variables without any reduction in number.
\end{rem}

\subsubsection{Linear forms coming from the functional relations}\label{sec:formes_fonc} In order to build $L_1,\ldots,L_q$, 
we could just take any basis of $\I_z(\f)_{\leq \delta d}$, specialize at $z=1$ and take the image by $\psi$. However, we have to be careful: we actually want 
the height of these linear forms to be  bounded independently of $d$ and $\delta$. 

We proceed as follows.
We consider a monomial ordering on $X_1,\ldots,X_m$ and choose some polynomials 
$Q_1(z,\X),\ldots,Q_k(z,\X) \in \OK[z,\X]$ such that $Q_1(1,\X),\ldots,Q_k(1,\X)$ form a 
Gr\"obner basis\footnote{See \cite{BWK93} for more details on Gr\"obner basis.} of
the ideal ${\rm ev}_1(\I_z(\f))$ defined in \eqref{eq:ev}. 
Let us assume that $\delta$ is large enough, say $\delta\geq \delta_2\geq \delta_1$, so that 
 $\delta d$ is larger than the total degree in $\X$ of each of the polynomials $Q_i$ (note that the later do not depend on $d$, so that $\delta_2$ does not depend on $d$). 
Let $\ell_1(z,\Y),\ldots,\ell_b(z,\Y) \in \OK[z,\Y]$ be an enumeration of all the linear forms of the form 
$$
\psi\left(\X^{\bnu}Q_{i}(z,\X)\right) \,,
$$
where $1\leq i \leq k$ and $\X^{\bnu}$ is a monomial such that $\X^{\bnu}Q_i(z,\X) \in \mathbb K(z)[\X]_{\leq \delta d}$.

\begin{lem}
	\label{lem:indep_Li}
	The rank of $\ell_1(1,\Y),\ldots,\ell_{b}(1,\Y)$ is equal to $q$.
\end{lem}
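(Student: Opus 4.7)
The plan is to sandwich the rank between $q$ and $q$ using the Gr\"obner basis property of $Q_1(1,\X),\ldots,Q_k(1,\X)$ combined with the auxiliary dimension identity $\dim_{\mathbb K} \ev_1(\I_z(\f))_{\leq \delta d} = q$. I would establish this identity first by viewing
\[
\mathcal P := \I_z(\f) \cap \mathbb K[z][\X]_{\leq \delta d}
\]
as a $\mathbb K[z]$-submodule of the free $\mathbb K[z]$-module $\mathbb K[z][\X]_{\leq \delta d}$ of rank $p$. Clearing denominators shows that $\mathcal P \otimes_{\mathbb K[z]} \mathbb K(z) = \I_z(\f)_{\leq \delta d}$, so $\mathcal P$ is free of rank $q$ as a $\mathbb K[z]$-module. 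Since $\I_z(\f)$ is prime, $(z-1) Q \in \I_z(\f)$ forces $Q \in \I_z(\f)$, so the kernel of the $\mathbb K$-linear specialization $\ev_1 \colon \mathcal P \to \mathbb K[\X]_{\leq \delta d}$ equals $(z-1)\mathcal P$, whence $\ev_1(\I_z(\f))_{\leq \delta d} \cong \mathcal P/(z-1)\mathcal P$ has $\mathbb K$-dimension $q$.

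The upper bound on the rank is then immediate: each $\X^\bnu Q_j(z,\X)$ appearing in the definition of the $\ell_i(z,\Y)$ belongs to $\I_z(\f)_{\leq \delta d} \cap \OK[z,\X]$, so every $\ell_i(1,\Y)$ is the image under the $\mathbb K$-linear bijection $\psi$ of an element of $\ev_1(\I_z(\f))_{\leq \delta d}$, yielding rank $\leq q$.

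For the lower bound, I would take the monomial ordering fixed at the outset to be a \emph{graded} one (say graded reverse lex); the standard Gr\"obner division algorithm then writes every $R \in \ev_1(\I_z(\f))_{\leq \delta d}$ as $\sum_j h_j(\X) Q_j(1,\X)$ with $\deg(h_j Q_j) \leq \delta d$. Expanding the $h_j$ into monomials, the family $\{\X^\bnu Q_j(1,\X) : |\bnu| + \deg_\X Q_j \leq \delta d\}$ spans $\ev_1(\I_z(\f))_{\leq \delta d}$ over $\mathbb K$, and applying $\psi$ shows that $\ell_1(1,\Y),\ldots,\ell_b(1,\Y)$ span the $q$-dimensional space $\psi(\ev_1(\I_z(\f))_{\leq \delta d})$, giving rank $\geq q$.

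The main obstacle is the dimension identity of the first paragraph: what truly drives it is the primality of $\I_z(\f)$, which keeps $z-1$ a non-zero-divisor modulo the ideal and prevents specialization at $z=1$ from collapsing the dimension. Once this is in place, both bounds fall out of the Gr\"obner-basis formalism in a purely combinatorial way.
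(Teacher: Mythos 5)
Your proof is correct, and while it establishes the same two facts as the paper (the dimension identity $\dim_{\mathbb K}\ev_1(\I_z(\f))_{\leq \delta d}=q$ and the spanning property of the $\X^\bnu Q_i(1,\X)$), it handles each one a bit differently, in a way worth noting. For the dimension identity the paper simply cites \cite[Lem.~3.1]{Be06}, whereas you prove it self-contained: you realize $\I_z(\f)\cap\mathbb K[z][\X]_{\leq \delta d}$ as a rank-$q$ free $\mathbb K[z]$-module, observe that specialization at $z=1$ has kernel exactly $(z-1)$ times this module (the key point being that $z-1$ is invertible in $\mathbb K(z)$, so division by $z-1$ stays inside $\I_z(\f)$), and read off the dimension of the quotient. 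This is a cleaner, citation-free route, and it is in keeping with the authors' stated preference for avoiding appeals to Beukers's paper. For the spanning step, you invoke the standard Gr\"obner division algorithm for a \emph{graded} order, while the paper runs a hand-rolled minimality argument on the least dominant monomial of an element of $V\setminus W$; these two arguments are essentially the same, and in fact the paper's inference ``Necessarily, $\deg A(\X)\leq\delta d-d_i$'' also silently requires the monomial order to be graded (so that the degree of a leading monomial controls the total degree), a hypothesis you make explicit. One small remark: your isomorphism actually identifies $\mathcal P/(z-1)\mathcal P$ with $\ev_1(\mathcal P)$, which a priori could be a proper subspace of $\ev_1(\I_z(\f))\cap\mathbb K[\X]_{\leq\delta d}$; but since your lower-bound argument shows the $\ell_i(1,\Y)$ span the latter while your upper-bound argument places them inside $\psi(\ev_1(\mathcal P))$, the two spaces coincide and the rank equals $q$, so the conclusion is unaffected.
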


\begin{proof}
	Set $V:={\rm ev}_1(\I_z(\f))\ \cap \mathbb K[\X]_{\leq \delta d}$. We have $\dim_{\mathbb K}(V)=q$ 
	(see, for example, \cite[Lem.\ 3.1]{Be06}). Since $\psi$ is injective, we only have to prove that the polynomials
	$\X^{\bmu}Q_{i}(1,\X)$ span $V$. Let $d_1,\ldots,d_k$ denote, respectively, the degrees of $Q_1(1,\X),\ldots,Q_k(1,\X)$. Set 
	\begin{multline*}
	W:=\{A_1(\X)Q_1(1,\X)+\cdots+A_k(\X)Q_k(1,\X)\, : 
	\\  A_i \in \mathbb K[\X]_{\leq d\delta - d_i},\,1\leq i \leq k \}\subset V
	\end{multline*}
and assume by contradiction that $W\subsetneq V$. Let $Q(\X) \in V \setminus W$ having the least dominant monomial 
possible among the elements of $V \setminus W$. 
Since $Q_1(1,\X),\ldots,Q_{k}(1,\X)$ is a Gr\"obner basis, there exists an integer $i$ and a polynomial $A(\X)$ such that the dominant monomial of 
$Q(\X) - A(\X)Q_i(1,\X)$ is strictly less than the one of $Q(\X)$. Necessarily, $\deg A(\X) \leq \delta d - d_i$. 
By minimality we have $Q(\X) - A(\X)Q_i(1,\X) \in W$. Since $A(\X)Q_i(1,\X) \in W$ we have $Q(\X) \in W$, a contradiction.
\end{proof}

It follows from Lemma~\ref{lem:indep_Li} that the rank of $\ell_1(1,\Y),\ldots,\ell_{b}(1,\Y)$ equals $q$. 
Without loss of generality, we can assume that $\ell_1(1,\Y),\ldots,\ell_{q}(1,\Y)$ are linearly independent over $\mathbb K$. 
In particular $\ell_1(z,\Y),\ldots,\ell_{q}(z,\Y)$ are linearly independent over $\mathbb K(z)$. We then set
\begin{equation}\label{eq:Ll}
L_1(\Y):=\ell_1(1,\Y),\ldots, L_q(\Y):=\ell_q(1,\Y)\,,
\end{equation}
which ends the construction of the linear forms corresponding to (a). 

\subsubsection{Linear forms coming from Siegel's method}\label{sec:padé} 

This section is devoted to the proof of the following result.   

\begin{prop}
	\label{prop:formes_Padé}
	Let $\varepsilon$ be a positive real number. Then there exist positive integers $n_1,\rho_1,\rho_2$ 
	(depending on $d$ and $\varepsilon$) such that, for every positive integer $n\geq n_1$, there exist linear forms 
	$M_i(\Y) \in \OK[Y]$, $1\leq i\leq w$, satisfying the three following conditions. 
	
	\medskip
	
	\begin{itemize}
		\item[\rm (a)] $\vert M_i(g_1(1),\ldots,g_p(1)) \vert \leq \rho_1 n^{-(w-1-\varepsilon)n}$, $1\leq i \leq w$.
		\medskip
		
		\item[\rm (b)] $H(M_i)\leq \rho_2 n^{(1+\varepsilon)n}$, $1\leq i \leq w$.	
		
		\medskip
		
		\item[\rm (c)] The $p$ linear forms $L_1(\Y),\ldots,L_q(\Y),M_1(\Y),\ldots,M_w(\Y)$ are linearly independent over $\mathbb K$. 
	\end{itemize}
\end{prop}

	The proof of Proposition~\ref{prop:formes_Padé} is a slight modification of Siegel's original argument (see, for instance, \cite[p.\,218--219]{FN}). 
	First we choose a positive real number $\varepsilon_0<1/2$, which will be chosen small enough later in the proof, and an integer $n$, 
	which will be chosen large enough later in the proof. 
	Siegel's lemma then classically implies the existence of a linear form
	$$
	R_0(z,\Y)=P_1(z)Y_1+\cdots+P_w(z)Y_w \in \OK[z,\Y]
	$$
	and a positive real number $c_1$, which does not depend on $n$, such that 
	\begin{eqnarray}\label{eq:slem}
	\nonumber \deg P_i&\leq& n \\
	H(P_i)&\leq& c_1 n^{(1+\varepsilon_0)n}\\
	\nonumber \val(R_0(z,\g(z)))&\geq& w(n+1)-\varepsilon_0 n -1\,,
	\end{eqnarray}
	where $\g:=(g_1,\ldots,g_p)$ and where we let $\val$ denote the usual valuation on $\mathbb K((z))$.  
	Given any linear form $L(z,\Y)\in \OK[z,\Y]$, 
	we can rewrite $T(z)\partial L(z,\g(z))$ as a linear form in $g_1,\ldots,g_p$, say 
	$$
	T(z)\partial L(z,\g(z))=:\sum_{i=1}^p B_i(z)g_i(z) \,,
	$$ 
	by using the differential system linking $g_1(z),\ldots,g_p(z)$. 
	Setting $$\Theta(L(z,\Y)) := \sum_{i=1}^p B_i(z) Y_i \in \OK[z,\Y]\, ,$$ we define a differential operator $\Theta$ in $\OK[z,\Y]$.   
	Now, for every $k\geq 0$, we set $$R_{k+1}(z,\Y):=\Theta( R_k(z,\Y))\,.$$

	Using Shidlowskii's lemma, we deduce the following result. 
	
	\begin{Fact}\label{fact1}
	There exists $n_0$ (which depends on $\varepsilon_0$, $p$ and $g_1(z),\ldots,g_p(z)$), such that if $n\geq n_0$, the linear forms $R_0(z,\g(z)),\ldots,R_{w-1}(z,\g(z))$ 
	are linearly independent over $\mathbb K(z)$. 
	\end{Fact}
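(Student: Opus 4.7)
The plan is to reduce Fact~\ref{fact1} to the classical form of Shidlovskii's lemma by working inside the $\mathbb K(z)$-vector space $V$ spanned by $g_1(z),\ldots,g_p(z)$, which has dimension $w$ and admits $g_1,\ldots,g_w$ as a basis. First I would observe that $V$ is stable under the derivation $T(z)\tfrac{d}{dz}$: since each $g_i$ is a monomial in $f_1,\ldots,f_m$, Leibniz's rule combined with the system $Y'(z)=A(z)Y(z)$ ensures that $T(z)g_i'(z)$ is a $\mathbb K(z)$-linear combination of monomials of the same total degree in the $f_j$'s, hence lies in $V$. In particular, $V$ becomes a $w$-dimensional differential module over $\mathbb K(z)$.

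Next I would fix, for each $j\in\{w+1,\ldots,p\}$, an identity $g_j(z)=\sum_{i=1}^w c_{j,i}(z)\,g_i(z)$ with $c_{j,i}(z)\in\mathbb K(z)$, and substitute these relations into $R_0(z,\Y)$. After clearing denominators in $z$, this produces a linear form $\widetilde R_0(z,Y_1,\ldots,Y_w)\in\OK[z,Y_1,\ldots,Y_w]$ satisfying $R_0(z,\g(z))=\widetilde R_0(z,g_1(z),\ldots,g_w(z))$, and by iteration $R_k(z,\g(z))=\widetilde R_k(z,g_1(z),\ldots,g_w(z))$, where $\widetilde R_{k+1}=\widetilde\Theta(\widetilde R_k)$ and $\widetilde\Theta$ is the operator on $\OK[z,Y_1,\ldots,Y_w]$ induced by the $w\times w$ differential system satisfied by $g_1,\ldots,g_w$ inside $V$. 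The valuation bound in~\eqref{eq:slem} transfers directly, giving $\val(\widetilde R_0(z,g_1(z),\ldots,g_w(z)))\geq w(n+1)-\varepsilon_0 n-1$, while the degree and height bounds on the coefficients of $\widetilde R_0$ are controlled by those of $R_0$ up to constants depending only on the fixed $c_{j,i}(z)$.

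At this stage, the data $(\widetilde R_0;g_1,\ldots,g_w)$ fits the hypotheses of the classical Shidlovskii lemma (see, e.g., \cite[Chap.\,III]{FN} or \cite[Chap.\,3]{Sh_Liv}), since $g_1,\ldots,g_w$ are $E$-functions forming a basis of a $w$-dimensional differential module and the valuation of $\widetilde R_0$ at $z=0$ is of the order required by that lemma. It then provides a threshold $n_0$, depending only on $\varepsilon_0$, $p$, and $g_1(z),\ldots,g_p(z)$, such that whenever $n\geq n_0$ the forms $\widetilde R_0,\widetilde R_1,\ldots,\widetilde R_{w-1}$ evaluated at $(g_1(z),\ldots,g_w(z))$ are linearly independent over $\mathbb K(z)$. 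Pulling back through the substitution yields the same conclusion for $R_0(z,\g(z)),\ldots,R_{w-1}(z,\g(z))$. The main delicate point is verifying the compatibility between $\Theta$ and $\widetilde\Theta$: one must check that the substitutions $g_j=\sum_{i=1}^w c_{j,i}g_i$ commute with the differential operator, which holds because both $\Theta$ and $\widetilde\Theta$ implement, after evaluation in $V$, the single derivation $T(z)\tfrac{d}{dz}$. Once this compatibility is established, the classical argument carries over essentially verbatim, and $n_0$ is inherited directly from the Shidlovskii lemma applied to $g_1,\ldots,g_w$.
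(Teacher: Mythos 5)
Your proposal is correct and follows essentially the same route as the paper's proof: reduce to the $w$-dimensional $\mathbb K(z)$-vector space spanned by $g_1,\ldots,g_w$, observe it carries the induced differential structure, and invoke the classical Shidlovskii lemma. One small simplification you could make: the explicit substitution producing $\widetilde R_0$ is superfluous, because Siegel's lemma in \eqref{eq:slem} already constructs $R_0(z,\Y)=P_1(z)Y_1+\cdots+P_w(z)Y_w$ in terms of $Y_1,\ldots,Y_w$ alone; the substitutions only become relevant for the iterates $R_k$ with $k\geq 1$, and even there it is cleaner, as the paper does, to note that the $R_k(z,\g(z))=(T\partial)^kR(z)$ span the same $\mathbb K(z)$-vector space as $R(z),R'(z),\ldots,R^{(w-1)}(z)$, so that Shidlovskii's lemma can be applied directly to ordinary derivatives.
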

	
	\begin{proof}
	We recall that the power series $g_1(z),\ldots,g_w(z)$ are linearly independent 
	over $\mathbb K(z)$ and form a vector solution to a linear differential system (which can be easily deduced from the original differential system 
	linking $g_1(z),\ldots,g_p(z)$).  Set $R(z):=R_0(z,\g(z))$. This is a linear form in $g_1(z),\ldots,g_w(z)$. 
	We infer from \eqref{eq:slem} that $R(z),R'(z),\ldots,R^{(w-1)}(z)$ have all valuation at least $wn -\varepsilon_0 n$. Since $\varepsilon_0<1$, 
	Shidlovslii's lemma (see, for instance, \cite[Lem.\ 5.2]{FN}) implies that they are linearly independent over $\mathbb K(z)$, as soon as $n$ is large enough with respect to $\varepsilon_0$ and the constant involved in Shidlovskii's lemma, say $n\geq n_0$. 
	On the other hand, it is easy to see that the power series  $R_i(z,\g(z))$, $0\leq i\leq w-1$, generate the same $\mathbb K(z)$-vector space as $R(z),R'(z),\ldots,R^{(w-1)}(z)$, which ends the proof.  
	\end{proof}

\begin{Fact}\label{lem:indep_S_L}
Let $n\geq n_0$.  Then there exist a positive real number $c_2$, which depends on $d$ and $\varepsilon _0$, but not on $n$, and a positive integer $c_3 \leq \varepsilon_0 n+ c_2$ such that the linear forms
	$$
	L_1(\Y),\ldots,L_q(\Y),R_0(1,\Y),\ldots,R_{w+c_3-1}(1,\Y)
	$$
	have rank $p$. 
\end{Fact}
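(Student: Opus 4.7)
The plan is to work with a more convenient basis for the $L_j$'s, form an explicit $p\times p$ determinant over $\mathbb{K}(z)$ that is nonzero, and then bound its order of vanishing at $z=1$ via a Shidlovskii-type argument.

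Since the rank appearing in the statement depends only on the $\mathbb{K}$-subspace spanned by $L_1,\ldots,L_q$, and since the proof of Lemma~\ref{lem:indep_Li} identifies this subspace with $\psi\bigl(\mathrm{ev}_1(\I_z(\f))\cap\mathbb{K}[\X]_{\leq\delta d}\bigr)$, I would first replace the $L_j$'s by an equivalent basis $\tilde L_j=\tilde\ell_j(1,\Y)$, where $\tilde\ell_j(z,\Y):=\psi(\tilde Q_j(z,\X))$ and $\tilde Q_1(z,\X),\ldots,\tilde Q_q(z,\X)\in\OK[z,\X]$ is a $\mathbb{K}(z)$-basis of $\I_z(\f)_{\leq\delta d}$ whose specializations at $z=1$ form a $\mathbb{K}$-basis of $\mathrm{ev}_1(\I_z(\f))\cap\mathbb{K}[\X]_{\leq\delta d}$. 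The crucial advantage is that $\tilde\ell_j(z,\g(z))=\tilde Q_j(z,\f(z))=0$, so the $\tilde\ell_j$'s are genuine $\mathbb{K}(z)$-linear relations among $g_1,\ldots,g_p$.

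I would then form the $p\times p$ matrix $\Delta(z)$ over $\mathbb{K}(z)$ whose rows are the coefficient vectors of $\tilde\ell_1(z,\Y),\ldots,\tilde\ell_q(z,\Y),R_0(z,\Y),\ldots,R_{w-1}(z,\Y)$. Its determinant $D(z)\in\mathbb{K}(z)$ is nonzero: any $\mathbb{K}(z)$-dependence among these rows, when evaluated at $\Y=\g(z)$, kills the $\tilde\ell_j$ contribution and produces a $\mathbb{K}(z)$-linear relation among $R_0(z,\g(z)),\ldots,R_{w-1}(z,\g(z))$, contradicting Fact~\ref{fact1}; the $\mathbb{K}(z)$-independence of the $\tilde\ell_j$'s then forces the coefficients of the $\tilde\ell_j$'s to vanish as well. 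Set $c_3:=\mathrm{ord}_{z=1}D(z)$. If $c_3=0$, the specialized family $\tilde L_1,\ldots,\tilde L_q,R_0(1,\Y),\ldots,R_{w-1}(1,\Y)$ already has rank $p$. Otherwise, the differential recursion $R_{k+1}=\Theta(R_k)$ furnishes a row-reduction scheme that exchanges a row whose specialization vanishes at $z=1$ against higher-indexed $R_k$'s; standard bookkeeping shows that $c_3$ additional forms taken from $R_w(1,\Y),\ldots,R_{w+c_3-1}(1,\Y)$ suffice to restore rank $p$.

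The main obstacle—the core of the argument—is the quantitative estimate $c_3\leq\varepsilon_0 n+c_2$ with $c_2$ independent of $n$. Decomposing $\mathbb{K}(z)^p=\Lambda\oplus\Lambda^{\perp}$ along the relation module $\Lambda$ yields a block factorization $D(z)=\det A(z)\cdot\det C(z)$, where $A(z)$ collects the $\tilde\ell_j$'s on $\Lambda$ and can be arranged to be nonvanishing at $z=1$ by our choice of basis, while $\det C(z)$ is essentially the Wronskian of $R_0(z,\g(z)),\ldots,R_{w-1}(z,\g(z))$ expressed in a basis of the $\mathbb{K}(z)$-module spanned by $g_1,\ldots,g_w$. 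The order of vanishing of this Wronskian at the regular point $z=1$ is exactly the classical Shidlovskii quantity, and combining Shidlovskii's lemma with the Siegel-type estimates $\deg P_i\leq n$ and $\val R_0(z,\g(z))\geq w(n+1)-\varepsilon_0 n-1$ produces a bound linear in $n$ with slope $\varepsilon_0$, plus an additive constant $c_2$ depending only on $d$, $\varepsilon_0$, and the differential system.
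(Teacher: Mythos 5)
Your proposal has the right overall shape—the $p\times p$ determinant $\Delta(z)$ of the coefficient vectors of $\ell_1,\dots,\ell_q,R_0,\dots,R_{w-1}$, non-vanishing via Fact~\ref{fact1}, and a bound $c_3\leq\varepsilon_0 n+c_2$ on its order at $z=1$—but there is a genuine gap in the rank-restoration step, which is precisely where the generality of the present setting (the $g_i$'s are not assumed linearly independent) must be confronted.

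When you invert the cofactor identity $\Delta\, Y_i=\sum_j\Delta_{i,j}R_{j-1}+\sum_j\Delta_{i,j+w}\ell_j$ and apply $(T\partial)^{c_3}$, the specialization at $z=1$ produces not only the forms $R_j(1,\Y)$ for $j\leq w+c_3-1$, but also the forms $\ell_j^{[k]}(1,\Y):=\Theta^k(\ell_j)(1,\Y)$ for $0\leq k\leq c_3$. Your ``row-reduction scheme'' tacitly assumes these extra forms cause no harm, but to conclude that $L_1,\dots,L_q,R_0(1,\Y),\dots,R_{w+c_3-1}(1,\Y)$ alone have rank $p$, one must check that each $\ell_j^{[k]}(1,\Y)$ lies in the $\mathbb K$-span of $L_1,\dots,L_q$. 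This is exactly where the construction of Section~\ref{sec:formes_fonc} is used: since $\ell_j^{[k]}(z,\g(z))=(T\partial)^k(0)=0$, the polynomial $\psi^{-1}(\ell_j^{[k]}(1,\Y))$ belongs to $\mathrm{ev}_1(\I_z(\f))\cap\mathbb K[\X]_{\leq\delta d}$, and by Lemma~\ref{lem:indep_Li} this space is exactly the $\psi$-preimage of $\mathrm{span}(L_1,\dots,L_q)$. Without this observation the argument collapses back to showing that a larger set of linear forms, not the one in the statement, has rank $p$. (Your preparatory replacement of the $L_j$'s by $\tilde L_j$'s vanishing at $\g(z)$ is not needed—the paper's $\ell_j$'s already vanish there by construction—but it does not fix this omission either.)

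On the quantitative side, your block factorization $D(z)=\det A(z)\cdot\det C(z)$ via $\mathbb K(z)^p=\Lambda\oplus\Lambda^{\perp}$ is more elaborate than necessary and introduces a change-of-basis determinant that could itself vanish at $z=1$; arranging a complement $\Lambda^{\perp}$ with a unimodular (or at least $z=1$-unit) change of basis requires justification you do not supply. The paper avoids this entirely with a direct degree/valuation count on $\Delta$ itself: $\deg\Delta\leq wn+O_d(1)$, while inverting $\Gamma$ against the relation $\Gamma\,{}^{\mathrm t}\g=(R_0(z,\g),\dots,R_{w-1}(z,\g),0,\dots,0)^{\mathrm t}$ yields $\val(\Delta)+\val(g_i)\geq w(n-1)-\varepsilon_0 n+1$ for some $i$, whence $\mathrm{ord}_{z=1}\Delta\leq\varepsilon_0 n+c_2$. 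Shidlovskii's lemma enters only to guarantee independence (Fact~\ref{fact1}), not the order bound; attributing that bound to a ``Shidlovskii-type argument'' on the Wronskian conflates two separate ingredients.
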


\begin{proof}
	Consider the matrix $\Gamma(z):=(\gamma_{i,j}(z))_{1\leq i ,j\leq p}$ defined by
	$$
	\gamma_{i,j}(z):=\left\{\begin{array}{ll} \text{the coefficient of $Y_j$ in $R_{i-1}(z,\Y)$,} & \text{ if } 1\leq i\leq w, \\
	\text{the coefficient in $Y_j$ of $\ell_{i-w}(z,\Y)$,} & \text{ if } w <i\leq p\,,\end{array}\right.
	$$
	where  $\ell_1(z,\Y),\ldots,\ell_q(z,\Y)$ are the linear forms defined in Section~\ref{sec:formes_fonc}.  
It has entries in $\OK[z]$ and satisfies
	\begin{equation}\label{eq:Gamma}
	\Gamma(z)\begin{pmatrix}
	g_1(z)\\ \vdots \\ g_p(z)
	\end{pmatrix}
	=\begin{pmatrix}
	R_0(z,\g(z)) \\ \vdots \\ R_{w-1}(z,\g(z))\\0 \\ \vdots \\ 0
	\end{pmatrix}\, \; \text{ and } \; \Gamma(z)\begin{pmatrix}
	Y_1\\ \vdots \\ Y_p
	\end{pmatrix}
	=\begin{pmatrix}
	R_0(z,\Y) \\ \vdots \\ R_{w-1}(z,\Y)\\ \ell_1(z,\Y) \\ \vdots \\ \ell_q(z,\Y)
	\end{pmatrix}\,.
	\end{equation}

It follows from Fact \ref{fact1}, that $R_0(z,\Y),\ldots,R_{w-1}(z,\Y)$ 
are linearly independent over $\mathbb K(z)$, as are 
$\ell_1(z,\Y),\ldots,\ell_q(z,\Y)$. Since the latter vanish at $\Y=\g(z)$, while  
the former remain linearly independent at $\Y=\g(z)$, we deduce that 
$$\ell_1(z,\Y),\ldots,\ell_q(z,\Y),R_0(z,\Y),\ldots,R_{w-1}(z,\Y)$$ are linearly independent over $\mathbb K(z)$. 
According to \eqref{eq:Gamma}, we get that 
$\Gamma(z)$ is invertible.
	
	\medskip Set $\Delta(z):=\det \Gamma(z)\neq 0$.
	On the one hand, we have that $\deg \Delta(z)\leq wn + \chi_1$ for some constant $\chi_1$ that does not depend on $n$, while, 
	on the other hand, inverting \eqref{eq:Gamma} and using that 
	$\val(R_k(z,\g(z))) \geq wn -\varepsilon_0 n - k -1$, we deduce that there exists $i$, $1\leq i\leq p$, such that
	$$
	\val(\Delta(z))  + \val( g_i(z))\geq w(n-1) -\varepsilon_0 n +1\,.
	$$
	It follows that the order of $\Delta(z)$ at $z=1$ is at most $\varepsilon_0 n + \chi_2$, for some positive real number $\chi_2$ that does not depend on $n$.  
	Let $\chi_3$ be the smallest integer such that  $\partial^{\chi_3}(\Delta)(1)\neq 0$. Hence 
	\begin{equation}\label{eq:rho34}
	\chi_3\leq \varepsilon_0 n + \chi_2\,. 
	\end{equation}
	
	By inverting $\Gamma$ using \eqref{eq:Gamma}, we obtain that
	$$
	\Delta Y_i=\sum_{j=0}^{w-1} \Delta_{i,j+1}R_{j} + \sum_{j=1}^q \Delta_{i,j+w} \ell_j\,, \quad \forall i, 1\leq i\leq p\,,
	$$
	where we let $\Delta_{i,j}$ denote the cofactors of $\Gamma$. Applying $\chi_3$ times the operator $T\partial$, we obtain
	$$
	T^{\chi_3} \partial^{\chi_3}(\Delta)Y_i = \sum_{k=0}^{\chi_3-1} \partial^k(\Delta) \mathcal L_{k,i} + \sum_{j=0}^{w+\chi_3-1} p_{i,j,k}R_{j} + \sum_{j=1}^q\sum_{k=0}^{\chi_3} q_{i,j,k}\ell_j^{[k]} \,,
	$$
where $p_{i,j,k},q_{i,j,k}\in \mathbb K[z]$, $\mathcal L_{k,i}\in \mathbb K[z,\Y]$ and $\ell_j^{[k]}=(T\partial)^{k}(\ell_j)$. Specializing at $z=1$, and using 
that $\partial^k(\Delta)(1) =0$ when $k<\chi_3$, we deduce that 
	$$
	T(1)^{\chi_3}\partial^{\chi_3}(\Delta)(1) Y_i = \sum_{j=0}^{w+\chi_3-1} \lambda_{i,j,k}R_{j}(1,\Y) + \sum_{j=1}^q\sum_{k=0}^{\chi_3} \gamma_{i,j,k}\ell_j^{[k]}(1,\Y)\,,
	$$
	with $\lambda_{i,j,k}, \gamma_{i,j,k}\in \mathbb K$. Since $\partial^{\chi_3}(\Delta)(1)\neq 0$ and $T(1)\neq 0$, we obtain that 
	$Y_i$ belongs to the $\mathbb K$-vector space spanned by the $R_j(1,\Y)$, $0 \leq j \leq w+\chi_3-1$ and the $\ell_j^{[k]}(1,\Y)$, $1\leq j \leq q$, $0\leq k \leq \chi_3$. Since this holds for all $i$, $1\leq i\leq p$, these linear forms have rank at least $p$. 
	
	On the other hand, for every $k\geq 0$, we have
	$$
	\ell_j^{[k]}(z,g_1(z),\ldots,g_p(z))=(T\partial)^k(\ell_j(z,g_1,\ldots,g_p))=(T\partial)^k(0)=0 \,.
	$$
	Thus, the forms $\ell_j^{[k]}(z,\Y)$, $1\leq j \leq q$, $0\leq k \leq \chi_3$, belong to $\psi({\rm ev}_1(\mathcal I_z(\f))\cap \mathbb K[\X]_{\leq \delta d})$ and hence 
	are linear combinations of the $\ell_j(z,\Y)$, $1\leq j\leq q$. It follows that the rank of 
	the linear forms $$R_0(1,Y),\ldots,R_{w+\chi_3-1}(1,\Y),\ell_1(1,\Y),\ldots \ell_q(1,\Y)$$ is equal to $p=w+q$. According to \eqref{eq:Ll} and \eqref{eq:rho34}, and setting $c_2:=\chi_2$ and $c_3:=\chi_3$, 
	this ends the proof.  
\end{proof}

We are now ready to end the proof of the proposition. 

\begin{proof}[Proof of Proposition \ref{prop:formes_Padé}]
According to Fact~\ref{lem:indep_S_L}, we can find $w$ independent linear forms among $R_0(1,\Y),\ldots,R_{w+c_3-1}(1,\Y)$, say 
$M_1(\Y),\ldots,M_w(\Y)$, such that 
$$
L_1(\Y),\ldots,L_q(\Y),M_1(\Y),\ldots,M_w(\Y)
$$
form a basis of the $\mathbb K$-vector space $\mathbb K Y_1+\cdots+\mathbb K Y_p$. This proves Property (c) of the proposition. 
The fact that the linear forms $M_1(\Y),\ldots,M_w(\Y)$ also satisfy (a) and (b) is classical (see, for instance,  \cite[p.\,218-219]{FN}). 
Indeed, when constructing the first linear form $R_0(z,\Y)$, one can choose $\varepsilon_0$ sufficiently small 
with respect to $\varepsilon$ and then $n$ sufficiently large with respect to $\varepsilon_0$, say $n\geq n_1\geq n_0$. 
Then Property (a) follows from the third inequality in \eqref{eq:slem}, while Property (b) follows from the second one. 
\end{proof}

\subsection{Linear forms coming from $\J$}\label{sec:formes_J}

In this section, we introduce two additional families of linear forms, which are associated with the ideal $\mathcal J$ defined in \eqref{eq:defJ}.  

\medskip

\begin{itemize}
	\item[\rm (c)] We complete the family $L_1(\Y),\ldots,L_q(\Y)$ into a basis of $\psi(\I_1(\f)_{\leq \delta d})$ by adding linear forms $L_{q+1}(\Y),\ldots,L_{r}(\Y)$\footnote{It follows from Beukers' Lifting Theorem \cite[Thm.\ 1.3]{Be06} that $r=q$, so there is in fact no need to construct these linear forms. 
	However, we prefer not to use this strong statement and show instead how our main result can be proved using the classical approach followed by Siegel, Shidlovskii and Lang.}.  They \emph{vanish} when specializing each $Y_i$ at $g_i(1)$, $1\leq i\leq p$.
		
	\medskip
	
	\item[\rm (d)] We complete $L_1(\Y),\ldots,L_r(\Y)$ into a basis of $\psi(\J_{\leq \delta d})$ by adding 
	$u=s-r$ linear forms $N_1(\Y),\ldots,N_u(\Y)$, with the property that 
	\begin{equation}\label{eq:fmin}
	\vert N_i(\g(1))\vert \leq \eta_2^{\delta d} \vert P(\f(1))\vert\,, \quad 1\leq i\leq u\,,
	\end{equation} where $\eta_2$ is a positive real number that 
	only depends on $f_1,\ldots,f_m$. 
\end{itemize}

\subsubsection{Construction of $L_{q+1},\ldots,L_r$}\label{sec:formes_val}

Reasoning as in the proof of Lemma~\ref{lem:indep_Li}, we can find a basis of $\psi(\I_1(\f)_{\leq \delta d})$ with coefficients in $\OK$ and whose  height 
 does not depend on $\delta$ nor on $d$. Since $L_1(\Y),\ldots,L_q (\Y)\in \psi(\I_1(\f)_{\leq \delta d})$ are linearly independent, we can chose $(r-q)$ linear forms $L_{q+1}(\Y),\ldots,L_{r}(\Y)$ among this basis 
so that $L_1(\Y),\ldots,L_r(\Y)$ form a basis of $\psi(\I_1(\f)_{\leq \delta d})$. 

\subsubsection{Construction of $N_1,\ldots,N_u$}\label{sec:formes_mesure}
We first observe that the vector space $\psi(\J_{\leq \delta d})$ is spanned by $L_1(\Y),\ldots,L_r(\Y)$ and all the linear forms of the form $\psi(\X^\bmu P(\X))$, where $\X^\bmu$ is a monomial with degree at most $d\delta - d$. Since by definition $\dim\J_{\leq \delta d}=s$, we can find $u=s-r$ linear forms $N_1(\Y),\ldots,N_u(\Y)$ 
among the linear forms $\psi(\X^\bmu P(\X))$ so that
$$
L_1(\Y),\ldots, L_r(\Y),N_1(\Y),\ldots,N_u(\Y)
$$
form a basis of $\psi(\J_{\leq d\delta})$. Furthermore, for every $i$, $1\leq i\leq u$, we have $N_i(\g(1))=\f(1)^{\bmu_i}P(\f(1))$ for some vector $\bmu_i$ 
whose norm is at most $d\delta - d$. The existence of $\eta_2$ satisfying the inequality given in (d) follows directly. 

\begin{rem}\label{rem:ro4}
	The linear forms $L_1,\ldots,L_r,N_1,\ldots,N_u$ have coefficients in $\OK$. The height of  each of the forms $L_1,\ldots,L_r$ is bounded independently of $d$ and $\delta$, say by $\rho_3$. 
	The height of each of the forms $N_1,\ldots,N_u$ is equal to $H(P)$.
\end{rem}

\subsection{End of the proof of Case~(E) of Theorem \ref{thm:measure_system}}

We first choose an integer $\delta\geq \delta_2$. 
By Lemma \ref{lem:7-8}, we thus have $w > vh$. Let us choose $\varepsilon$ small enough so that $w\geq(1+2\varepsilon)vh$. 
Note that this choice does not depend on $d$. Now we let $n\geq n_1$ be an integer which will be defined more precisely 
 later in the proof.   

Applying our previous results, we get that 
$L_1,\ldots,L_r,N_1,\ldots,N_u$ are linearly independent over $\mathbb K$. Hence we can add to these linear forms $p-r-u=p-s=v$ linear forms among $M_1,\ldots,M_w$ 
to obtain a basis of  $\mathbb K Y_1+\cdots +\mathbb K Y_p$. 
Reordering  $M_1,\ldots,M_w$ if necessary, we can assume that these forms are  $M_1,\ldots,M_v$.

 Set $H_2 :=\rho_2 n^{(1+\varepsilon)n}$, where $\rho_2$ is defined by Proposition \ref{prop:formes_Padé}. The latter implies that 
  $H(M_i)\leq H_2$ for all $i$, $1\leq i\leq v$. 
By definition of $\I_1(\f)$, we also have $L_i(g_1(1),\ldots,g_p(1))=0$ for every $i$, $1\leq i \leq r$. 
Using a classical argument, as in \cite[Inequality (9),\,p.\,215]{FN}, we obtain the existence of a positive real number $\rho_4$, that depends neither on $n$ 
nor on $\varepsilon$, such that 
	\begin{equation}\label{eq:valeur_formes}
 	\sum_{i=1}^{v} \frac{\vert M_i(g_1(1),\ldots,g_p(1)) \vert}{H_2}+\sum_{i=1}^{u} \frac{\vert N_i(g_1(1),\ldots,g_p(1)) \vert }{H(P)} \geq \rho_4\frac{\vert \Delta \vert }{\rho_3^rH_2^{v}H(P)^{u}}\,,
	\end{equation}
	where $\Delta:=\det(L_1,\ldots,L_r,M_1,\ldots,M_{v},N_1,\ldots,N_u)$ and $\rho_3$ is defined in Remark~\ref{rem:ro4}. Since $\Delta\in \OK$, we deduce that 
	\begin{equation}\label{eq:min_Delta}
	\vert \Delta \vert \geq \house{\Delta}^{1-h} \geq  \rho_5(H_2^{v}H(P)^{u})^{1-h} \,,
	\end{equation}
	for some positive real number $\rho_5$. 
	By  Proposition \ref{prop:formes_Padé}, we have 
	$$
	 \frac{\vert M_i(g_1(1),\ldots,g_p(1)) \vert}{H_2} \leq \frac{\rho_1}{\rho_2} n^{-wn}, \quad \forall i,\,1\leq i \leq v\,.
	$$
We deduce from 
\eqref{eq:fmin},  \eqref{eq:valeur_formes} and \eqref{eq:min_Delta} the existence of two positive real numbers 
$\rho_6$ and $\rho_7$, which can depend on $d$ but not on $n$ nor on $\varepsilon$, such that
	\begin{equation}\label{eq:minoP}
\vert P(\f(1)) \vert \geq \rho_6H(P)^{1-uh} n^{-(1+\varepsilon)nvh}  -  \rho_7 H(P)n^{-wn} \,.
\end{equation}
At this point, we choose  $n$ to be the smallest integer such that $n^{\varepsilon nvh}$ is larger than $\frac{2 \rho_7}{\rho_6}H(P)^{uh}$. Furthermore, 
we assume $H(P)$ to be large enough, so that $n\geq n_1$.  Then we get that 
$$
\vert P(\f(1)) \vert \geq \frac{\rho_6}{2}H(P)^{1-uh} n^{-(1+\varepsilon)nvh} \,.
$$
By minimality of $n$, and assuming  $H(P)$ to be large enough with respect to $d$, we deduce the existence of a positive real number 
$\eta_3$ (which depends on $\varepsilon$ but not on $d$) such that 
$n^{(1+\varepsilon)nvh}\leq H(P)^{\eta_3u}$. Then we infer from \eqref{eq:minoP} the existence of two positive  
real numbers  $\eta_4$ and $\rho_8>$ such that
$$
\vert P(\f(1)) \vert \geq \rho_8H(P)^{-\eta_4u}\, ,
$$
where $\eta_4$ does not depend on $d$ but $\rho_8$ does. Since, there are only finitely many polynomials of bounded degrees and height, enlarging $\rho_8$ (for each $d$) if necessary,  we can assume that the latter inequality holds for all $P$ such that $P(\f(1))\not=0$. Since, by Lemma \ref{lem:7-8}, $u\leq \eta_1\delta^td^t$ and neither $\delta$ nor $\eta_1$ depend on $d$, Case (E) of Theorem~\ref{thm:measure_system} is proved.
\qed


\subsection{Effectivity of the bounds in Case~(E) of Theorem \ref{thm:measure_system}}\label{sec:E_func_measures}
At each step of the proof above, the constants can be effectively bounded. We do not provide further details, as such quantifications are well known.  
However, we emphasize that the constants $\eta_1$ and $\rho_3$ do not explicitly appear in the classical proofs where $f_1, \ldots, f_m$ are assumed to be algebraically independent. Moreover, in these proofs, the parameter $\delta_1$ depends  only on $m$ and the degree of the number field $\mathbb{K}$, with this dependence given explicitly.  
Furthermore, one can take 
$\delta_2=\delta_1$.
In the more general setting we consider here, these constants depend on the ideal $\mathcal{I}_z(\f)$. More precisely, given a basis of $\mathcal{I}_z(\f)$, one can compute a Gröbner basis using Buchberger's algorithm (cf.\ \cite[Thm.\ 5.53]{BWK93}) and derive explicit bounds for $\delta_1, \delta_2, \eta_1$, and $\rho_3$. The main challenge lies in computing a basis for $\mathcal{I}_z(\f)$ itself (see the discussion in \cite{AF24_EM}).

Another delicate point is bounding the constant $n_0$ appearing in Fact \ref{fact1}, which originates from Shidlovskii's Lemma. In principle, this can be achieved using the methods from \cite{BB85, BCY04}. Once bounds for $\delta_1, \delta_2, \eta_1, \rho_3$, and $n_0$ are established, bounding the remaining constants
$$
\eta_2,\eta_3,\eta_4,\rho_1,\rho_2,\rho_4,\ldots,\rho_9 \;\; \mbox{and} \;\; n_1$$
follows from the results in \cite{Sh79}. Finally, we highlight (without proof) that, in Case (E) of Theorem \ref{thm:measure_system}, the constants can be chosen as
$$
C_1=e^{-e^{C_3 h^{2t}d^{2t}\log(hd)}},\quad \text{ and } C_2=C_4 h^{t+1}\,,
$$
where $C_3$ and $C_4$ are effectively computable constants that do not depend on $d$, $h$, or $t$. Notably, these bounds are similar to those given in \eqref{eq:bounds_ci}.

\section{Proof of Case (M$_q$) of Theorem~\ref{thm:measure_system}}\label{sec:M_func}

In this section, our goal is to establish Case (M$_q$) of Theorem~\ref{thm:measure_system}. 
We follow the proof of Case (M$_q$) of Theorem~\ref{thm:LGN}, as given by Becker and Nishioka, which is based on Elimination Theory. For the reader's convenience, we adhere as closely as possible to the exposition in Nishioka's book \cite[Chap.\ 4]{Ni_Liv}. We assume that the reader is familiar with \cite[Chap.\ 4]{Ni_Liv} and the proof of Case (M$_q$) of Theorem~\ref{thm:LGN} presented therein. We will not provide the full details of all the computations. 
Instead, we focus on the new argument that we introduce.

\subsection{Basics of Elimination Theory} 

With any homogeneous unmixed ideal $I \subset \mathbb{Z}[X_0, \dots, X_m] =: \mathbb{Z}[\mathbf{X}]$ (see \cite[p.\,119]{Ni_Liv} for a definition) and any point $\boldsymbol{\omega} \in \mathbb{C}^{m+1}$, we associate the quantities $h(I)$, $N(I)$, $H(I)$, $\| \boldsymbol{\omega}, I \|$, and $|I(\boldsymbol{\omega})|$, as defined in \cite[Chap.\,4]{Ni_Liv}. 

The first three somehow measure the complexity of the homogeneous unmixed ideal $I$. 
The integer $h(I)$ denotes the height of $I$, already introduced in Section \ref{sec:Efunc_dimension}. Heuristically, the quantity $N(I)$ (sometimes denoted $\deg(I)$) is related to the degrees of a minimal set of generators of $I$, while $H(I)$ is related to the absolute values of the coefficients of the polynomials in such  a set.

The non-negative real number
\[
    \| \boldsymbol{\omega}, I \| = \min \{ \| \boldsymbol{\beta} - \boldsymbol{\omega} \| : \boldsymbol{\beta} \in \mathbb{C}^{m+1} \setminus \{0\} \text{ is a zero of } I \} \in \mathbb{R}_{\geq 0} \cup \{\infty\}
\]
denotes the distance from $\boldsymbol{\omega}$ to the zero set of $I$. The quantity $|I(\boldsymbol{\omega})|$, sometimes referred to as the \textit{absolute value of $I$ at 
$\boldsymbol{\omega}$}, is linked to the absolute values of the evaluations of a minimal set of generators of $I$ at $\boldsymbol{\omega}$ (see \cite[Chap.\ 3, \S4]{NP01} for further details).

 The proof of Case (M$_q$) of Theorem~\ref{thm:measure_system} relies on two key results from Elimination Theory. The first is an \textit{arithmetic Bézout theorem} \cite[Prop.\ 4.11]{NP01}. Given a homogeneous prime ideal $\mathfrak{p} \subset \mathbb{Q}[\mathbf{X}]$, a homogeneous polynomial $Q \notin \mathfrak{p}$, and a point $\boldsymbol{\omega} \in \mathbb{C}^{m+1}$, this theorem guarantees the existence of a homogeneous unmixed ideal $I$ whose zero set coincides with the intersection of the zero sets of $\mathfrak{p}$ and $Q$. Moreover, it satisfies a bound of the form 
\begin{equation}\label{eq:Elimination_Bezout} 
\vert I(\boldsymbol{\omega})\vert \leq \Theta\left(N(\mathfrak{p}), H(\mathfrak{p}), \vert \mathfrak{p}(\boldsymbol{\omega})\vert, \Vert \boldsymbol{\omega}, \mathfrak{p} \Vert, \deg(Q), H(Q), \vert Q(\boldsymbol{\omega})\vert \right), 
\end{equation}
for some function $\Theta : (\mathbb{R}_{\geq 0})^7 \to \mathbb{R}_{\geq 0}$, which is independent of $\mathfrak{p}$, $Q$, $I$, and $\boldsymbol{\omega}$. Furthermore, the parameters $N(I)$ and $H(I)$ are bounded in terms of $h(\mathfrak{p})$, $N(\mathfrak{p})$, $H(\mathfrak{p})$, $\deg(Q)$, and $H(Q)$.

The second result is a \textit{Liouville-type inequality} for homogeneous unmixed ideals \cite[Prop.,4.13]{NP01}. Given such an ideal $I$ and a point $\boldsymbol{\omega} \in \mathbb{C}^{m+1}$, this inequality provides a lower bound of the form:
\begin{equation}\label{eq:Elimination_Liouville} \vert I(\boldsymbol{\omega})\vert \geq \Phi\left(h(I), N(I), H(I), \Vert \boldsymbol{\omega}, I \Vert\right), \end{equation}
for some function  $\Phi : (\mathbb{R}_{\geq 0})^4 \to \mathbb{R}_{\geq 0}$, which is independent of both $I$ and $\boldsymbol{\omega}$.

\subsection{Strategy of proof}

Let $\omeg=(1,f_1(\alpha),\ldots,f_m(\alpha))$. 
The first step is to establish a lower bound of the form
\begin{equation}\label{eq:Elimination_Liouville2}
\vert I(\omeg) \vert \geq \Psi(N(I),H(I),h(I))\,,
\end{equation}
for any homogeneous unmixed ideal $I$ satisfying $h(I)\geq m-t+1$, where $\Psi : (\mathbb R_{\geq 0})^3 \to \mathbb R_{\geq 0}$ is independent of both $I$ and $\omeg$. 
Notably, in contrast to \eqref{eq:Elimination_Liouville}, this lower bound does not depend on $\Vert \omeg,I \Vert$. It corresponds to Propositions \ref{prop:majo_ideal} and \ref{prop:majo_ideal_bis} below. 

We proceed by contradiction, assuming that there exists a homogeneous unmixed ideal $I$ that does not satisfy \eqref{eq:Elimination_Liouville2}, so that 
\begin{equation}\label{eq:Elimination_Liouville2N}
\vert I(\omeg) \vert < \Psi(N(I),H(I),h(I))\,.
\end{equation}
 Among such ideals, we select one with maximal height. Considering its primary decomposition, we may reduce to the case where $I$ is a prime ideal. Comparing \eqref{eq:Elimination_Liouville} with \eqref{eq:Elimination_Liouville2N}, we conclude that $\Vert \omeg,I \Vert$  is small. 

Next, assume we can construct a homogeneous auxiliary polynomial $Q \in \mathbb{Z}[\mathbf{X}]$ taking a small non-zero value at $\boldsymbol{\omega}$, relative to its degree and height. Applying the arithmetic Bézout theorem to $I$ and $Q$, we obtain a new ideal, denoted $J$, whose height exceeds that of $I$. If $\Psi$ and $Q$ are chosen appropriately, we can combine the upper bounds for $\Vert \omeg,I \Vert$ and $|I(\boldsymbol{\omega})|$ with Inequality \eqref{eq:Elimination_Bezout} (applied with  $J$ in place of of $I$ and $I$ in place of $\mathfrak p$) to show that $J$ also fails to satisfy \eqref{eq:Elimination_Liouville2}. This contradicts the fact the $I$ has maximal height among the homogeneous unmixed ideals that does not satisfy \eqref{eq:Elimination_Liouville2}, thereby proving \eqref{eq:Elimination_Liouville2}.

To construct the auxiliary polynomial $Q$, we begin by considering a Padé approximant of the form
$$
R(z)=R_0(z,1,f_1(z),\ldots,f_m(z))
$$
where $R_0 \in \Q[z,\X]$  is a homogeneous polynomial in $\X$.  
By applying Siegel's Lemma, we obtain the existence an $R_0$ such that $R(z)$ has a high order of vanishing at $0$, with the degree and height of $R_0$ bounded in terms of this vanishing order.  
Next, using the Mahler system \eqref{eq:MSystem}, we can express $R(z^{q^k})$ as a polynomial in $1,f_1(z),\ldots,f_m(z)$, denoted by $R_k(z,\X)$. 
The polynomial $R_k(\alpha,\X)$ takes a small value at $\omeg$, but it has coefficients in $\Q$. Subsequently, applying the descent lemma \cite[Lem.\ 4.1.9]{Ni_Liv}, we obtain a homogeneous auxiliary polynomial $Q_k(\X) \in \Z[\X]$ from each $R_k(\alpha,\X)$. This process is described in Propositions~\ref{prop:polynôme_aux} and \ref{prop:polynôme_aux2} below.

The final step is as follows. Let $\prescript{\rm h}{}{P}\in \Z[\X]$ denote the homogenized polynomial obtained from the polynomial $P$ of Theorem~\ref{thm:measure_system} by adding the variable $X_0$. We aim to establish a lower bound for  $\vert \prescript{\rm h}{}{P}(\omeg) \vert$. Let $\mathfrak p$ denote the homogeneous ideal associated with $\omeg$, i.e., 
the ideal of $\Z[\X]$ generated by the homogeneous polynomials $Q \in \Z[\X]$ such that $Q(\omeg)=0$. 
Consider the ideal $I$ given by applying the arithmetic Bézout theorem  to $\mathfrak p$ and $\prescript{\rm h}{}{P}$. We have $h(I)=h(\p)+1\geq m-t+1$, so  \eqref{eq:Elimination_Liouville2} holds. By comparing this with \eqref{eq:Elimination_Bezout} and noting that $\vert \mathfrak p(\omeg)\vert=\Vert \omeg, \mathfrak p\Vert=0$, we obtain the inequality 
\begin{equation*}
\Theta\left(N(\mathfrak p),H(\mathfrak p),0,0,\deg(\prescript{\rm h}{}{P}),H(\prescript{\rm h}{}{P}),\vert \prescript{\rm h}{}{P}(\omeg)\vert \right) \geq \Psi(N(I),H(I),h(I))\,.
\end{equation*}
Since $N(I)$, $H(I)$ and $h(I)$ are bounded in terms of  $\mathfrak p$, $\deg(\prescript{\rm h}{}{P})$ and $H(\prescript{\rm h}{}{P})$, and since the ideal $\mathfrak p$ depends only on $f_1,\ldots,f_m$ and $\alpha$, this provides a lower bound for $\vert \prescript{\rm h}{}{P}(\omeg)\vert$, which completes  the proof.

The main novelty, compared to the proof of case (M$_q$) in Theorem \ref{thm:LGN}, lies in the use of the arithmetic Bézout theorem in the last step. Under the assumption of Theorem~\ref{thm:LGN}, we have $\mathfrak p = (0)$ and one may take $I=(\prescript{\rm h}{}{P})$. In contrast, when $\mathfrak p \neq (0)$, we have $t<m$. Thus, taking $I=(\prescript{\rm h}{}{P})$ does not lead to a contradiction since \eqref{eq:Elimination_Liouville2} holds only when $h(I) \geq m-t+1$. 
Moreover, we must adapt  the Padé approximant construction to the case where the functions are not assumed to be algebraically independent. Specifically, we use a refined version of Nishioka's Multiplicity Lemma \cite[Thm.\ 4.3]{Ni_Liv}.

To derive finer bounds for $C_1$ and $C_2$, we analyze two different regimes based on whether  $H(P)$ is large with respect to $\deg(P)$ or not. 
The first regime corresponds to Propositions~\ref{prop:polynôme_aux} and \ref{prop:majo_ideal}, while the second corresponds to Propositions~\ref{prop:polynôme_aux2} and \ref{prop:majo_ideal_bis}.

\subsection{Proof of Case (M$_q$) of Theorem~\ref{thm:measure_system}}\label{sec:casemq}
We continue with the notation introduced in this section. Recall that $\X:=(X_0,\ldots,X_m)$.
We now present a refined version of Nishioka's Multiplicity Lemma \cite[Thm.\ 4.3]{Ni_Liv}. We recall that, as in Section~\ref{sec:padé}, $\val$ denotes the usual valuation associated with $z$. This following result is proved in \cite[Thm.\ V.1.1]{Fe19}.

\begin{lem}
	[Multiplicity Lemma]\label{lem:Multiplicity}
	Let $M,N\geq 1$ be two integers.
	Let $R_0 \in \C[z,X_1,\ldots,X_m]$ with $\deg_z (R_0)\leq M$ and total degree at most $N$ in $X_1,\ldots,X_m$. If $R(z):=R_0(z,f_1,\ldots,f_m)\neq 0$ then
	$$
	\val (R) = \mathcal O(MN^{t}) \,,
	$$
	where the constant in $\mathcal O(\cdot)$ depends only on $f_1,\ldots,f_m$.
\end{lem}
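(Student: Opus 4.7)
The plan is to adapt Nishioka's classical multiplicity lemma \cite[Thm.~4.3]{Ni_Liv} (the case $t = m$) by carrying out the argument modulo the ideal of algebraic relations $\I_z(\f)$ rather than in the full polynomial ring. Set $\mathbb K := \Q(z)$ and $\mathcal A := \mathbb K[X_1, \dots, X_m]/\I_z(\f)$. The dimensional input is the Hilbert--Serre theorem: since $\I_z(\f)$ is prime of height $m - t$ (as recalled in Section~\ref{sec:Efunc_dimension}), one has $\dim_{\mathbb K} \mathcal A_{\leq N} \leq c_0 N^t$ with $c_0$ depending only on $\f$. Heuristically, this already explains why $N^m$ can be replaced by $N^t$: whereas Nishioka's original argument counts, via Siegel's lemma, monomials $\X^{\bmu}$ of total degree at most $N$ (of cardinality $\binom{N+m}{m} = \mathcal O(N^m)$), the relevant objects here are equivalence classes in $\mathcal A_{\leq N}$, of which there are only $\mathcal O(N^t)$ many.

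I see two natural implementations. The first (direct) approach is to rerun Nishioka's proof step by step, replacing the ambient polynomial space $\mathbb K[X]_{\leq N}$ by $\mathcal A_{\leq N}$ throughout the Siegel-lemma construction of auxiliary polynomials. The second (reductive) approach is to relabel so that $f_1, \dots, f_t$ form a transcendence basis of $\mathbb K(\f)$ over $\mathbb K$, and each $f_j$ for $j > t$ satisfies an irreducible minimal polynomial $P_j \in \Q[z, X_1, \dots, X_t, X_j]$ of fixed degrees depending only on $\f$. Iterated resultants of $R_0$ with $P_{t+1}, \dots, P_m$ (eliminating one variable at a time) then produce a nonzero polynomial $\tilde R_0 \in \Q[z, X_1, \dots, X_t]$, of controlled degrees $\mathcal O(M)$ in $z$ and $\mathcal O(N)$ in each $X_i$; the nonvanishing $\tilde R_0(z, f_1, \dots, f_t) \neq 0$ follows from the irreducibility of each $P_j$ over $\mathbb K(f_1, \dots, f_t)$. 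Recognizing $\tilde R_0(z, f_1, \dots, f_t)$ as a constant multiple of the norm $N_{\mathbb K(\f)/\mathbb K(f_1, \dots, f_t)}\bigl(R_0(z, \f)\bigr)$ yields a valuation comparison $\val R_0(z, \f) \leq \val \tilde R_0(z, f_1, \dots, f_t) + \mathcal O(N)$, after which Nishioka's classical lemma applied to $\tilde R_0$ (valid since $f_1, \dots, f_t$ are algebraically independent and satisfy a Mahler subsystem of the original) gives $\val \tilde R_0(z, f_1, \dots, f_t) = \mathcal O(MN^t)$.

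The main obstacle is the valuation comparison in the reductive approach: one must verify that the Puiseux valuations of the Galois conjugates of $f_{t+1}, \dots, f_m$ in an algebraic closure of $\Q((z))$ are bounded in absolute value by a constant depending only on $\f$, so that the contribution of the non-identity conjugates to the norm costs at most $\mathcal O(N)$ in valuation. This is ultimately a statement about the branching behavior of the algebraic function field $\mathbb K(\f)$ above $z = 0$. In the direct approach, the analogous obstacle is to control the heights of a suitable $\mathbb K$-basis of $\mathcal A_{\leq N}$ (for instance via a standard monomial basis attached to a Gr\"obner basis of $\I_z(\f)$), so that the Siegel-lemma step in Nishioka's argument transfers verbatim; either way, the core difficulty reduces to a quantified form of the Hilbert--Serre counting.
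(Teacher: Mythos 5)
You have correctly identified the governing idea --- replace the exponent $m$ by $t$ because $(1,f_1,\ldots,f_m)$ lies on the $t$-dimensional variety cut out by the ideal of relations --- and this is indeed the content of the paper's modification. But the mechanism you attribute to Nishioka's original argument is not right, which makes your ``direct'' approach, as stated, not a modification of the actual proof. The multiplicity lemma \cite[Thm.~4.3]{Ni_Liv} is not a Siegel-lemma construction: Siegel's lemma builds the Pad\'e approximant whose order of vanishing the multiplicity lemma is then used to bound from \emph{above}, and that upper bound comes from elimination theory, not from counting monomials. Nishioka starts from the homogeneous principal ideal $(\prescript{\rm h}{}{R_0})$ of height $1$ and, iterating the Mahler substitution together with an arithmetic B\'ezout theorem over $\C(z)$, raises the height at each stage; the factor $N^m$ accrues from the B\'ezout degree bounds over the roughly $m$ stages of that chain. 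The paper's modification --- sketched in the remark after the lemma and carried out in \cite[Thm.~V.1.1]{Fe19} --- is to apply the arithmetic B\'ezout theorem \cite[Thm.~4.1.7]{Ni_Liv} at the outset to the pair $\bigl(\mathfrak p_z, \prescript{\rm h}{}{R_0}\bigr)$, where $\mathfrak p_z\subset\C[z][\X]$ is the homogeneous ideal generated by the forms vanishing at $(1,f_1,\ldots,f_m)$, of height $m-t$ over $\C(z)$. Since $R\neq 0$ forces $\prescript{\rm h}{}{R_0}\notin\mathfrak p_z$, one gets an unmixed ideal of height $m-t+1$ with degree $\mathcal O(N)$ and height $\mathcal O(MN)$, up to constants depending only on $\f$ through $\mathfrak p_z$, and the elimination chain now terminates after $t$ steps. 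That is the precise quantitative realization of your heuristic ``work modulo $\I_z(\f)$'', but the engine is B\'ezout, not Siegel; the Hilbert--Serre dimension count you cite explains \emph{why} the exponent drops, not \emph{how}.

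Your second, reductive route via iterated resultants and a norm comparison is genuinely different from the paper, and the obstacle you flag is the real one: to pass from $\val\,\tilde R_0(z,f_1,\ldots,f_t)$ to $\val\,R_0(z,\f)$ you need uniform bounds, depending only on $\f$, on the Puiseux valuations at $z=0$ of the conjugates of $f_{t+1},\ldots,f_m$ over $\C(z)(f_1,\ldots,f_t)$, together with control on the valuations of the leading coefficients introduced at each resultant step. These conjugates are formal Puiseux series that need not themselves be Mahler functions, so that bound requires a separate argument which the proposal does not supply; as written this route has a gap at exactly the point you suspected.
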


\begin{rem}
	Alternatively, this result can be proven with a small modification of the proof of \cite[Thm.\ 4.3]{Ni_Liv}. 
	Let $\mathfrak p_z\subset \C[z][\X]$ denote the homogeneous ideal generated by the homogeneous polynomials that vanishes at $(1,f_1,\ldots,f_m)$. 
	Let $\prescript{\rm h}{}{R_0}\in \C[z][\X]$ be the polynomial, homogeneous in $\X$, obtained from $R_0$. The key modification is to replace the ideal
	 $I$ used in \cite{Ni_Liv} with the ideal derived from the arithmetic Bézout theorem over $\C(z)$ \cite[Thm.\ 4.1.7]{Ni_Liv}, applied to $\mathfrak p_z$ and $\prescript{\rm h}{}{R_0}$. 
\end{rem}

Throughout the proof, the constants $\gamma_1,\gamma_2,\gamma_3,\ldots$ will denote positive real numbers that depend only on $f_1,\ldots,f_m$ and $\alpha$ while, for any $\lambda$, the constants $\rho_1(\lambda),\rho_2(\lambda)$ will depend on $f_1,\ldots,f_m$, $\alpha$ and $\lambda$. 
Recall that we define $\omeg:=(1,f_1(\alpha),\ldots,f_m(\alpha))$. The purpose of Propositions~\ref{prop:polynôme_aux} and \ref{prop:polynôme_aux2} is to construct 
the auxiliary polynomials corresponding to the first and second regimes, respectively.

\begin{prop}
	\label{prop:polynôme_aux}
	There exist positive integers $\gamma_1,\ldots,\gamma_{6}$ such that, for any $N\geq \gamma_1$ and any $k$ with $q^k\geq \gamma_2N^{t+1}$ there exists a homogeneous polynomial $Q_k\in \Z[\X]$ satisfying the following inequalities:
	\begin{equation}
	\label{eq:deg_height_Qk}
	\deg (Q_k) \leq \gamma_{3}N,\quad \log H(Q_k)\leq \gamma_{4}Nq^k\,,
	\end{equation} 
	\begin{equation}\label{eq:encadr_Qk}
	-\gamma_{5}N^{t+1}q^k \leq \log\left(\vert Q_k(\omeg) \vert \vert \omeg \vert^{-\deg (Q_k)}\right) \leq -\gamma_{6}N^{t+1}q^k\,,
	\end{equation}
	and
	\begin{equation}
	\label{eq:majo_Qk}
	\vert Q_k(\omeg) \vert \vert \omeg \vert^{-\deg (Q_k)} \leq H(Q_k)^{-1}(\deg (Q_k)+1)^{-(2m+2)}\,.
	\end{equation}
\end{prop}
\begin{proof}

	This proposition corresponds to \cite[Prop.\,4.4.7]{Ni_Liv}. The main difference is that we do not assume that the functions $f_1,\ldots,f_m$ are algebraically independent. 
	Thus, the integer $m$, which in the setting of \cite{Ni_Liv} corresponds to the transcendence degree of these functions, must be replaced by 
$t$ in \eqref{eq:encadr_Qk}. To accommodate this new situation, we assume, without loss of generality, that $f_1,\ldots,f_t$ are algebraically independent over $\Q(z)$.
	
	We proceed as in \cite[p.\,140]{Ni_Liv} but instead of constructing a Padé approximant for $1,f_1,\ldots,f_m$, we construct a Padé approximant for $1,f_1,\ldots,f_t$, 
	denoted by 
	$$
	R(z)=R_0(z,1,f_1(z),\ldots,f_t(z)) \,,
	$$
	where $R_0(z,\X) \in \Q[z,\X]$ is homogeneous in $\X$, and we have 
	$$\deg_z (R_0),\,\deg_{\X} (R_0) \leq N,\; \log H(R_0) =\mathcal O(N^{t+1}),\text{ and }\val (R)\geq \gamma_7 N^{t+1}\,,$$ 
	for some positive real number $\gamma_7$.
	Meanwhile, Lemma \ref{lem:Multiplicity} applied with $M=N$ implies that $\val (R) = \mathcal O(N^{t+1})$. 
	The remainder of the proof follows straightforwardly from the proof of \cite[Prop.\ 4.4.3]{Ni_Liv} (see also \cite[Prop.\ V.3.1]{Fe19} with $M = N$ for detailed computations). 

By enlarging $\gamma_1$ if necessary,  \eqref{eq:majo_Qk}  follows easily from \eqref{eq:deg_height_Qk} and \eqref{eq:encadr_Qk}. 
Note that the purpose of \eqref{eq:majo_Qk} is to ensure that we can later apply the arithmetic Bézout theorem \cite[Lem.\ 4.1.3]{Ni_Liv}. 
\end{proof}

The following result is analogous to \cite[Prop.\ 4.4.8]{Ni_Liv}.

\begin{prop}
	\label{prop:polynôme_aux2}
There exist positive real numbers $\gamma_{8},\ldots,\gamma_{12}$, with $\gamma_{11}\geq 1$, such that, for any $\delta \geq t+2$ and any $s \geq \gamma_{8}$,  there exists a homogeneous polynomial $Q \in \Z[X_0,\ldots,X_m]$ satisfying the following inequalities:
	\begin{equation*}
	\deg (Q) \leq \gamma_{9}s,\quad \log H(Q)\leq \gamma_{10}s^\delta\,,
	\end{equation*} 
	\begin{equation*}
	-\gamma_{11}s^{t+\delta} \leq \log\left(\vert Q(\omeg) \vert \vert \omeg \vert^{-\deg (Q)}\right) \leq -\gamma_{12}s^{t+\delta}\,,
	\end{equation*}
	and
	\begin{equation*}
	\vert Q(\omeg) \vert \vert \omeg \vert^{-\deg (Q)} \leq H(Q)^{-1}(\deg (Q)+1)^{-(2m+2)}\,.
	\end{equation*}
\end{prop}

\begin{proof} 
For any $\delta\geq t+2$ and $s$, let $k$ be the least integer such that $q^{k}\geq 2\gamma_2s^{\delta-1}$ and set $N := \lceil s\rceil$. If $\gamma_8$ is large enough and $s \geq \gamma_8$, then the integers $N$ and $k$ satisfy the assumptions of Proposition \ref{prop:polynôme_aux}. The result then follows from 
Proposition~\ref{prop:polynôme_aux} by setting $Q=Q_k$ and noting that $Nq^k=\mathcal O(s^\delta)$. There is no restriction in assuming that $\gamma_{11}\geq 1$.
\end{proof}

The following two propositions are analogs of \cite[Prop.\ 4.4.9 and 4.4.10]{Ni_Liv}, respectively. They provide lower bounds of the form \eqref{eq:Elimination_Liouville2}, which we will need for the first and second regimes.

\begin{prop}\label{prop:majo_ideal}
	There exist a positive real number $\gamma_{13}$ and, for each $\lambda$, a positive real number $\rho_1(\lambda)$ such that, for any real numbers $\lambda,D,H$ 
	satisfying 
	$$
	\lambda \geq \gamma_{13},\quad D\geq \rho_1(\lambda),\quad \log H \geq D^{2t+3}\,,
	$$
	and any non-zero unmixed homoegeneous ideal $I \subset \Z[\X]$ that satisfies the following four conditions:
	\begin{itemize}
		\item[\rm (i)]  $I \cap \mathbb Z = (0)$,
		\item[\rm (ii)]  $r:=m+1 - h(I)$ is such that $1\leq r \leq t$,
		\item[\rm (iii)]  $N(I) \leq \lambda^{t-r}D^{t-r+1}$,
		\item[\rm (iv)]  $\log H(I) \leq \lambda^{t-r}D^{t-r}\log H$,
	\end{itemize}
	the following inequality holds: 
	\begin{equation}\label{eq:mino_Iomega}
	\log \vert I(\omeg) \vert \geq -\lambda^rD^{r-1}(D\log H(I)+N(I)\log H)\,.
	\end{equation}
\end{prop}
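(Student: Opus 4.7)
The plan is to argue by contradiction and induction on $r$, mirroring the approach of Nesterenko and Nishioka \cite[Prop.\ 4.4.9]{Ni_Liv}. Suppose that for some $r$ with $1 \leq r \leq t$ there is a homogeneous unmixed ideal $I$ satisfying (i)-(iv) for which \eqref{eq:mino_Iomega} fails. First I would choose such a counter-example $I$ with $r$ minimal (equivalently, $h(I)$ maximal), and pass to a primary component so that $I$ may be assumed prime. Because $I$ violates \eqref{eq:mino_Iomega}, the Liouville-type inequality \eqref{eq:Elimination_Liouville} for $I$ immediately forces $\Vert \omeg, I\Vert$ to be extremely small: quantitatively, $-\log \Vert \omeg, I\Vert$ must exceed a suitable positive multiple of $\lambda^r D^{r-1}(D \log H(I) + N(I)\log H)$.

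Next I would choose parameters $N$ and $k$ depending on $D$, $H$, $\lambda$, and $r$ in such a way that $N$ is of order $D$ and $N q^k$ is of order $\lambda^{r-1}D^{r-1}(D\log H(I)+N(I)\log H)$, and apply Proposition~\ref{prop:polyn�me_aux} to obtain a homogeneous polynomial $Q := Q_k \in \Z[\X]$. Since $\vert Q(\omeg)\vert \vert \omeg\vert^{-\deg Q}$ is exponentially small in $Nq^k$ by \eqref{eq:encadr_Qk}, while the upper bound for $\Vert \omeg, I \Vert$ obtained above is even smaller, a standard comparison of $\vert Q(\omeg)\vert$ with $\Vert\omeg, I\Vert$ shows that $Q\notin I$. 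The arithmetic B�zout theorem applied to the prime $I$ and $Q$ (whose admissibility is precisely what \eqref{eq:majo_Qk} guarantees) then produces a homogeneous unmixed ideal $J$ with $h(J) = h(I)+1$, hence associated parameter $r-1$, and with $N(J)$ and $H(J)$ controlled via \eqref{eq:Elimination_Bezout} in terms of $h(I), N(I), H(I), \deg(Q), H(Q)$.

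The next step is to verify that $J$ still satisfies hypotheses (i)-(iv) with $r$ replaced by $r-1$: the resulting bounds on $N(J)$ and $H(J)$ must fit inside $\lambda^{t-r+1}D^{t-r+2}$ and $\lambda^{t-r+1}D^{t-r+1}\log H$ respectively, and this is what dictates the admissible size of $\gamma_{13}$ and $\rho_1(\lambda)$ together with the hypothesis $\log H \geq D^{2t+3}$. Combining the smallness of $\vert I(\omeg)\vert$ and $\vert Q(\omeg)\vert$ with \eqref{eq:Elimination_Bezout} then yields that $J$ also violates \eqref{eq:mino_Iomega} at level $r-1$. If $r\geq 2$ this contradicts the minimal choice of $r$; if $r=1$ then $h(J)=m+1$, so $J$ has empty zero locus in projective space, $\Vert \omeg, J\Vert=\infty$, and \eqref{eq:Elimination_Liouville} applied to $J$ alone gives a clean lower bound for $\vert J(\omeg)\vert$ depending only on $h(J), N(J), H(J)$, contradicting the failure of \eqref{eq:mino_Iomega} at level $0$.

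The hard part, as usual in this circle of ideas, is the quantitative bookkeeping in the second and third paragraphs: the parameters $N$ and $k$ must be tuned so that \emph{simultaneously} $Q\notin I$, the auxiliary polynomial satisfies \eqref{eq:majo_Qk} so that the arithmetic B�zout theorem applies, and the derived ideal $J$ inherits conditions (iii)-(iv) at level $r-1$ with the \emph{same} constant $\lambda$. This forces one to track powers of $\lambda$ very carefully when iterating, and is precisely what explains the exponents $t-r$ appearing in (iii)-(iv) and the apparently ad hoc assumption $\log H \geq D^{2t+3}$. The only genuinely new input compared with \cite[Prop.\ 4.4.9]{Ni_Liv} is the use of the refined multiplicity bound in Lemma~\ref{lem:Multiplicity}, which propagates through Proposition~\ref{prop:polyn�me_aux} and replaces the role played by $m$ in Nishioka's argument by the transcendence degree $t$.
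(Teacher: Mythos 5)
Your proposal follows essentially the same strategy as the paper (which closely follows Nishioka \cite[Prop.\ 4.4.5 and 4.4.9]{Ni_Liv}): assume a counterexample $I$ of maximal height, reduce to a prime $\p_0\supset I$ via primary decomposition, use the Liouville inequality to show $\Vert\omeg,\p_0\Vert$ is tiny, tune parameters $N,k$ to build the auxiliary polynomial $Q_k$ of Proposition~\ref{prop:polyn�me_aux}, apply the arithmetic B\'ezout theorem to $\p_0$ and $Q_k$ to produce $J$ of larger height that still violates \eqref{eq:mino_Iomega}, contradicting maximality (with a separate endgame when $h(I)=m$). The only small imprecision is in the sentence arguing $Q\notin I$: it is the \emph{lower} bound in \eqref{eq:encadr_Qk} (not the fact that $\vert Q(\omeg)\vert$ is exponentially small) that keeps $\vert Q(\omeg)\vert$ larger than $\Vert\omeg,\p_0\Vert$ and hence forces $Q\notin\p_0$; the upper bound in \eqref{eq:encadr_Qk} is what feeds the subsequent estimate showing $J$ is a better counterexample. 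You also correctly identify the genuinely new ingredient relative to Nishioka, namely that the multiplicity bound of Lemma~\ref{lem:Multiplicity} lets $t$ replace $m$ throughout the construction.
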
 

\begin{rem}
Condition (ii) is equivalent to $h(I)\geq m-t+1$. It is worth noting that Nishioka's theorem~\cite[Thm.\ 4.2.1]{Ni_Liv}, which asserts that ${\rm tr.deg}_{\Q}(\omeg)=t$, can be deduced from this proposition. Indeed, if we had ${\rm tr.deg}_{\Q}(\omeg) <t$, then the homogeneous  ideal $\mathfrak p$ associated with $\omeg$ would have height $m-{\rm tr.deg}_{\Q}(\omeg) \geq m-t+1$. Thus, it would satisfy Conditions {\rm (i)} to {\rm (iv)} of Proposition~\ref{prop:majo_ideal} for sufficiently large parameters $\lambda$, $D$, and $H$.  However, it would not satisfy \eqref{eq:mino_Iomega}, since we would have $\vert \mathfrak p(\omeg) \vert =0$ (see \cite[Cor.\ 4.10]{NP01}).
\end{rem}

\begin{proof}
	The proof is similar to the one in \cite[Prop.\ 4.4.9]{Ni_Liv}, but with   \cite[Prop.\ 4.4.7]{Ni_Liv} replaced by Proposition~\ref{prop:polynôme_aux}, and the exponent $m$ replaced by $t$. We proceed by contradiction. Fix $\lambda,D$, and $H$ satisfying the assumptions of the proposition with $\gamma_{13}$ and $\rho_1(\lambda)$ chosen large enough, to be specified later. Suppose that there exists an ideal $I$ satisfying Conditions (i) to (iv) but not \eqref{eq:mino_Iomega}. We choose such an ideal $I$ with the largest height. 
	
	We proceed as in the proof of \cite[Prop.\ 4.4.5]{Ni_Liv}\footnote{In \cite{Ni_Liv}, Nishioka proves Proposition 4.4.5, which deals with the study of Mahler systems with polynomial coefficients. Proposition~\ref{prop:majo_ideal} actually corresponds to Proposition 4.4.9 in \cite{Ni_Liv}, which deals with general systems, and for which the proof is omitted. This explains the slight differences in our computations compared to those in the proof of Proposition 4.4.5 in \cite{Ni_Liv}.}. Using the primary decomposition of $I$, we obtain the existence of a prime ideal $\p_0 \supset I$ such that
	$$
h(\p_0)=h(I),\,\,	N(\p_0)\leq  \lambda^{t-r}D^{t-r+1},\,\,	\log H(\p_0)\leq \lambda^{t-r}D^{t-r}(\log H + m^2D)\,,
	$$
	and
	$$
	\log \vert \p_0(\omeg) \vert \leq -\frac{1}{3}\lambda^rD^{r-1}(D\log H(\p_0)+N(\p_0)\log H)=:-X\,,
	$$
	providing that $D$ is large enough with respect to $\lambda$. 
	
	Let $\mu$ be such that $\lambda = \mu^{2t+2}$ and set $N:=\lfloor \mu^{2t}D\rfloor$.  Assuming that $D$ is large enough with respect to $\lambda$ and using \cite[Lem.\ 4.1.4]{Ni_Liv}, we obtain 
	\begin{equation*}\label{eq:min_X_p}
	\min \{X,-\frac{1}{2}\log \Vert \omeg,\p_0\Vert\}\geq \log H \geq D^{2t+3}\geq \gamma_2\gamma_5qN^{2t+2}\,.
	\end{equation*} 
Let $k$ be the largest integer such that $\gamma_5N^{t+1}q^k \leq \min \{X,-\frac{1}{2}\log \Vert \omeg,\p_0\Vert\}$.  
Then, if $D$ is large enough, we may consider the auxiliary polynomial $Q_k$ given by Proposition~\ref{prop:polynôme_aux}.  We apply the arithmetic Bézout theorem \cite[Lem.\ 4.1.3]{Ni_Liv} with $\p_0$ and $Q_k$. If $h(I)<m$, this provides an ideal $J$ with $h(J)>h(I)$. Arguing as in the final part of the proof of \cite[Prop.\ 4.4.5]{Ni_Liv}, if $\lambda$ were chosen large enough, one can check that $J$ satisfies Conditions (i) to (iv) but not \eqref{eq:mino_Iomega}. This contradicts the maximality of $h(I)$. Therefore, $h(I)=m$. This implies that $r=1$ and  \cite[Lem.\ 4.1.3]{Ni_Liv}  leads to a contradiction, completing the proof.  
\end{proof}

\begin{prop}\label{prop:majo_ideal_bis}
	There exist a positive real number $\gamma_{14}$ and, for each $\lambda$, a positive real number $\rho_2(\lambda)$ such that, for any real numbers $\lambda,\delta,D,H$ satisfying 
	$$
	\lambda \geq \gamma_{14},\quad t+2\leq \delta \leq 2t+3,\quad D\geq \rho_2(\lambda),\quad \log H = D^{\delta}\,,
	$$
 and any non-zero unmixed homogeneous ideal $I \subset \Z[\X]$ satisfying the following four conditions:
	\begin{itemize}
		\item[\rm (i)]   $I \cap \mathbb Z = (0)$,
		\item[\rm (ii)]   $r:=m+1 - h(I)$ is such that $1\leq r \leq t$,
		\item[\rm (iii)]   $N(I) \leq \lambda^{t-r}D^{t-r+1}$,
		\item[\rm (iv)]   $\log H(I) \leq \lambda^{t-r+\delta-1}D^{t-r}\log H$,
	\end{itemize}
	the following inequality holds: 
	\begin{equation}\label{eq:mino_Iomega_bis}
	\log \vert I(\omeg) \vert \geq -\lambda^{r-1}D^{r-1}(\lambda D\log H(I)+\lambda^{\delta}N(I)\log H)\,.
	\end{equation}
\end{prop}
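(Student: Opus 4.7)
My plan is to run the same contradiction/induction-on-height argument as in the proof of Proposition~\ref{prop:majo_ideal}, but to invoke the sharper auxiliary polynomial from Proposition~\ref{prop:polynôme_aux2} in place of the one from Proposition~\ref{prop:polynôme_aux}, calibrated to the regime where $\log H = D^{\delta}$ with $t+2 \leq \delta \leq 2t+3$. The exponent $\delta$ is precisely what forces the new shape of Conditions (iii)--(iv) and of the conclusion \eqref{eq:mino_Iomega_bis}.

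The first step is to assume, for contradiction, that there exists a non-zero unmixed homogeneous ideal $I \subset \Z[\X]$ satisfying (i)--(iv) but not \eqref{eq:mino_Iomega_bis}, and to fix such an $I$ with the largest possible height. A primary decomposition argument carried out exactly as in \cite[Prop.\ 4.4.5]{Ni_Liv} then produces a homogeneous prime ideal $\p_0 \supset I$ with $h(\p_0)=h(I)$, with $N(\p_0)$ and $\log H(\p_0)$ controlled by the same bounds as $I$ (up to an additive $O(D^{t-r+1})$ loss in $\log H$), and with
$$
\log \vert \p_0(\omeg) \vert \;\leq\; -\tfrac{1}{3}\lambda^{r-1}D^{r-1}\bigl(\lambda D\log H(\p_0)+\lambda^{\delta}N(\p_0)\log H\bigr) =: -X,
$$
provided $D \geq \rho_2(\lambda)$ is large enough. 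Combining (iii)--(iv) with $\log H = D^{\delta}$ and the Liouville-type inequality \cite[Lem.\ 4.1.4]{Ni_Liv}, I would simultaneously obtain an upper bound of the form $-\tfrac{1}{2}\log \Vert \omeg, \p_0 \Vert \geq \log H$.

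Next, I would write $\lambda = \mu^{2t+2}$ and pick $s$ of the form $s \asymp \mu^{\alpha} D$ for a suitable exponent $\alpha = \alpha(t,\delta)$, tuned so that $\gamma_{11} s^{t+\delta}$ falls just below $\min\{X, -\tfrac{1}{2}\log \Vert \omeg, \p_0\Vert\}$. Then $s \geq \gamma_{8}$ whenever $D$ is large with respect to $\lambda$, and Proposition~\ref{prop:polynôme_aux2} produces a homogeneous $Q \in \Z[\X]$ with $\deg Q \leq \gamma_{9} s$, $\log H(Q) \leq \gamma_{10} s^{\delta}$, and $\log(\vert Q(\omeg) \vert \vert \omeg \vert^{-\deg Q}) \in [-\gamma_{11} s^{t+\delta}, -\gamma_{12} s^{t+\delta}]$. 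In particular $Q \notin \p_0$, so the arithmetic Bézout theorem \cite[Lem.\ 4.1.3]{Ni_Liv} applies to $(\p_0, Q)$. If $h(\p_0) < m$, this yields an unmixed ideal $J$ with $h(J) = h(\p_0)+1$ together with explicit bounds on $N(J)$ and $\log H(J)$ in terms of those for $\p_0$ and $Q$. A direct substitution then shows that these bounds match (iii) and (iv) with $r$ replaced by $r-1$, while the Bézout-type estimate on $\vert J(\omeg) \vert$, combined with the upper bounds on $\vert \p_0(\omeg) \vert$, $\Vert \omeg,\p_0 \Vert$ and $\vert Q(\omeg) \vert$, forces $J$ to violate \eqref{eq:mino_Iomega_bis} for height $h(\p_0)+1$. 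This contradicts the maximality of $h(I)$. If instead $h(I)=m$, so that $r=1$, a single application of \cite[Lem.\ 4.1.3]{Ni_Liv} gives a direct numerical contradiction, as at the end of the proof of Proposition~\ref{prop:majo_ideal}.

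The main obstacle is the parameter bookkeeping. Because hypothesis (iv) involves the factor $\lambda^{t-r+\delta-1}$ while the conclusion involves $\lambda^{r-1}\lambda D \log H(I)$ and $\lambda^{r-1}\lambda^{\delta} N(I)\log H$, the exponent of $\lambda$ has to be tracked very carefully through the arithmetic Bézout step. The exponent $\alpha$ in the choice $s \asymp \mu^{\alpha} D$ must be tuned so that simultaneously the new ideal $J$ still fits into Conditions (iii)--(iv) with $r-1$ in place of $r$, and the Bézout-type upper bound on $\vert J(\omeg) \vert$ beats what \eqref{eq:mino_Iomega_bis} would predict for $J$. Balancing these two inequalities at once is exactly what dictates the precise shape of (iii), (iv) and of the bound in the conclusion, and is what distinguishes Proposition~\ref{prop:majo_ideal_bis} from Proposition~\ref{prop:majo_ideal}.
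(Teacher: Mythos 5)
Your proposal is correct and takes essentially the same approach as the paper: the paper's proof is a one-line reference to the analogous argument in Nishioka's monograph with $m$ replaced by $t$ and Proposition~\ref{prop:polyn�me_aux2} substituted for the auxiliary-polynomial input, and your sketch spells out exactly that contradiction/maximal-height argument (primary decomposition to a prime $\p_0$, choice of $s$ against $X$ and $\Vert\omeg,\p_0\Vert$, arithmetic B\'ezout to climb in height, and the terminal case $h(I)=m$). The parameter bookkeeping you flag is indeed the substance one must verify, and your identification of where the exponent $\delta$ enters the hypotheses and conclusion is accurate.
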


\begin{proof}
	The proof is analogous to the one in \cite[Prop.\ 4.4.10]{Ni_Liv}, with the exponent $m$ replaced by $t$, and Proposition \ref{prop:polynôme_aux2} replacing 
	\cite[Prop.\ 4.4.8]{Ni_Liv}.
\end{proof}

We are now able to prove  Case (M$_q$) of Theorem~\ref{thm:measure_system}.

\begin{proof}[Proof of Case (M$_q$) of Theorem~\ref{thm:measure_system}]
Consider the homogeneous polynomial $\prescript{\rm h}{}{P}\in \Z[\X]$ obtained from $P$. Thus, we have $\prescript{\rm h}{}{P}(\omeg)=P(f_1(\alpha),\ldots,f_m(\alpha))$, 
$\deg(\prescript{\rm h}{}{P})=\deg(P)$, and $H(\prescript{\rm h}{}{P})=H(P)$. Assume that $\vert \prescript{\rm h}{}{P}(\omeg) \vert\neq 0$. We aim to bound $\vert \prescript{\rm h}{}{P}(\omeg) \vert$ from below. Let $\p$ denote the homogeneous ideal associated with $\omeg$, i.e., the homogeneous ideal generated by
	$$
	\{Q\in \Z[\X] \text{ homogeneous}\,:\, Q(\omeg)=0\}.
	$$
	This is a prime ideal, and $h(\p)\geq m-t$\footnote{In fact, by Nishioka's theorem \cite[Thm.\ 4.2.1]{Ni_Liv}, $h(\p)=m-t$, but  we do not need to use this fact explicitly in the proof.}.  	
	 Additionally, $N(\p)$ and $H(\p)$ are positive real numbers that depend  only on $\omeg$. Since $\omeg$ is a zero of $\p$, we also have $\Vert \omeg,\p \Vert = 0$ and $\vert \p(\omeg)\vert = 0$ (see \cite[Cor.\,4.10]{NP01}). We may assume that
	$$
	\vert \prescript{\rm h}{}{P}(\omeg) \vert \vert \omeg \vert^{-d} \leq H(P)^{-1}(d+1)^{-2m-2},
	$$
	where $d:=\deg(P)$, otherwise, there is nothing to prove. 
	
	We apply \cite[Lem.\,4.1.3]{Ni_Liv} to $\p$ and $\prescript{\rm h}{}{P}$. In this case, we take 
	$$
	X = -\log \left(\vert \prescript{\rm h}{}{P}(\omeg)\vert \vert \omeg \vert^{-d}\right) >0
	$$
	and $\sigma =1$. Since $\prescript{\rm h}{}{P}(\omeg)\neq 0$ we have $X < +\infty$. It follows from the arithmetic Bézout theorem \cite[Lem.\,4.1.3]{Ni_Liv} that there exists an unmixed homogeneous ideal $I$ such that $h(I)=h(\p)+1$, $I \cap \Z = (0)$, and
	\begin{eqnarray}
	\nonumber N(I)&\leq& \gamma_{15} d
	\\  \label{eq:mino_intersec_ideals} \log H(I) &\leq& \gamma_{15}  d +  \gamma_{16} \log H(P)
	\\  \nonumber  \log \vert I(\omeg) \vert & \leq& \frac{\log \vert \prescript{\rm h}{}{P}(\omeg)\vert}{2}+\gamma_{17}d + \gamma_{16} \log H(P)\,.
	\end{eqnarray}
	Set $r:=m+1-h(I)$, so that
	$$
	r = m+1 - h(I) = m+1 - (h(\p)+1)=m-h(\p)\leq t\,.
	$$
	Now, let $\lambda:=\max\{\gamma_{13},\gamma_{14}\}$. If necessary, enlarge $\gamma_{15}$ so that 
	$$\gamma_{15} \geq \max\{\rho_1(\lambda),\rho_2(\lambda)\}\,.$$ We now consider two different regimes, according to the size of $\log H(P)$ relative to $d$. 
	
	\subsubsection*{First regime.} We assume that $\gamma_{15}  d +  \gamma_{16} \log H(P) \geq (\gamma_{15} d)^{2t+3}$. In that case, we set $D:= \gamma_{15} d$ and let $H$ be such that $\log H:=\gamma_{15}  d +  \gamma_{16} \log H(P)$.
It can be verified that $I$ satisfies Conditions (i) to (iv) of Proposition \ref{prop:majo_ideal}. Thus, by \eqref{eq:mino_Iomega} and \eqref{eq:mino_intersec_ideals} ,  we obtain 
	\begin{equation}\label{eq:minoP_1}
	\log \vert I(\omeg) \vert \geq   -\lambda^r D^{t-1}(D\log H(I)+N(I)\log H) \geq -\gamma_{18} (d^t\log H(P) + d^{t+1})\,.
\end{equation}
This gives the first bound.

\subsubsection*{Second regime} We assume that  $\gamma_{15}  d +  \gamma_{16} \log H(P) < (\gamma_{15} d)^{2t+3}$. In this case, set
	$$
	\delta := \max\left\{t+2\; ; \; \frac{ \log(\gamma_{15}  d +  \gamma_{16} \log H(P))}{\log(\gamma_{15}d)}\right\} \leq 2t+3\,,
	$$
	 $D:=\gamma_{15} d$ , and let $H$  be defined by $\log H = D^\delta$. One can check  that $I$ satisfies Conditions (i) to (iv) of Proposition~\ref{prop:majo_ideal_bis}. 
	 Applying  \eqref{eq:mino_Iomega_bis} and \eqref{eq:mino_intersec_ideals}, we obtain 
\begin{align*}
	\log \vert I(\omeg) \vert &\geq -\lambda^{r-1}D^{r-1}(\lambda D\log H(I)+\lambda^{\delta}N(I)\log H)
	\\ & \geq -\gamma_{19}(d^t\log H(P) + d^{t+\delta})\,.
\end{align*}
We distinguish two cases according to the value of $\delta$. If $\delta=t+2$, we obtain 
$$
	\log \vert I(\omeg) \vert \geq -\gamma_{20}(d^t\log H(P) +d^{2t+2})\,.
$$
If $\delta=\frac{ \log(\gamma_{15}  d +  \gamma_{16} \log H(P))}{\log(\gamma_{15}d)}$, 
we have that $d^\delta= \mathcal O(d+\log H(P))$ and we obtain 
$$
	\log \vert I(\omeg) \vert \geq -\gamma_{21}(d^t\log H(P) +d^{t+1})\,.
$$
We thus deduce that 
\begin{equation}\label{eq:minoP_2}
	\log \vert I(\omeg) \vert \geq -\gamma_{22}(d^t\log H(P) +d^{2t+2})\,.
\end{equation}
This provides the second bound. 
	
\subsubsection*{Conclusion} From \eqref{eq:mino_intersec_ideals}, \eqref{eq:minoP_1}, and \eqref{eq:minoP_2}, it follows that 
\begin{equation}\label{eq:minPMQ}
\vert \prescript{\rm h}{}{P}(\omeg)\vert =\big| P(f_1(\alpha), \dots, f_m(\alpha)) \big| \geq e^{-\gamma d^{2t+2}} H(P)^{-\gamma d^t}\, ,
\end{equation}
for some positive real number $\gamma$. 
Case (M$_q$) of Theorem~\ref{thm:measure_system} then follows by setting
$C_2:= \gamma$ and $C_1 := e^{-C_2 d^{2t+2}}$. 
\end{proof}

\section{Proof of Theorem~\ref{thm:measure_sans_system}}\label{sec:remove_sing}

To derive Theorem~\ref{thm:measure_sans_system} from Theorem~\ref{thm:measure_system}, we must identify a suitable linear system that involves the functions 
$f_1,\ldots,f_r$, along with possibly some additional functions, such that the point 
$\alpha$ is a regular point for this system.

The following result provides the necessary  background for applying Theorem~\ref{thm:measure_system}.

\begin{prop}\label{prop:good_eq} The two following results hold. 
{$\,$}
	\begin{itemize}
		\item[\rm (E)] Let $f$ be an $E$-function. Then $f$ is a solution to a linear differential equation over $\Q(z)$ whose  singularities are confined to $\{0,\infty\}$. 
		
		\item[\rm (M$_q$)] Let $f$ be an $M_q$-function and let $\alpha \in \Q^\times$ with $\vert \alpha \vert < 1$. Assume that $f$ is well-defined at $\alpha$ and 
		that $\ell$ is a positive integer satisfying  
		$\vert \alpha^{q^\ell} \vert < \rho$,  
		where $\rho>0$ denotes the radius of convergence of $f$.  Then $f$ is a solution to a $q^\ell$-Mahler equation for which $\alpha$ is a regular point. 
	\end{itemize}
\end{prop}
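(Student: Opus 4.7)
The plan is to handle the two parts separately.

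Part (E) is immediate from Andr\'e's theorem \cite{An00}, already invoked by the authors in the introduction: every $E$-function is annihilated by some linear differential operator in $\mathbb{Q}(z)[\partial]$ whose only singularities lie in $\{0,\infty\}$. Since $\alpha \neq 0$, any such operator has $\alpha$ as a regular point, which settles Part (E).

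For Part (M$_q$), the plan is to leverage the authors' own construction in \cite{AF24_EM}. I would first fix any $q$-Mahler equation satisfied by $f$ (which exists by the very definition of an $M_q$-function) and recast it as a first-order matrix system $Y(z^q) = A(z) Y(z)$ via the companion matrix construction, with $f$ appearing as a coordinate of $Y$. Next, I would iterate this system $\ell$ times to produce $Y(z^{q^\ell}) = B_\ell(z) Y(z)$, where
$$B_\ell(z) := A(z^{q^{\ell-1}}) A(z^{q^{\ell-2}}) \cdots A(z^q) A(z),$$
which in particular exhibits $f$ as a solution of a $q^\ell$-Mahler equation. The remaining task is to show that, after a suitable gauge transformation, $\alpha$ becomes a regular point of this iterated system.

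The key point, and the reason for the hypothesis $|\alpha^{q^\ell}|<\rho$, is the following. Since $|\alpha|<1$, one has $|\alpha^{q^{n\ell}}| \leq |\alpha^{q^\ell}| < \rho$ for every $n \geq 1$, so $f$ is analytic at each point of the $q^\ell$-orbit $\{\alpha^{q^{n\ell}} : n \geq 1\}$; together with the hypothesis that $f$ is well-defined at $\alpha$ itself, this forces every singularity of $B_\ell$ along this orbit to be \emph{apparent}, in the sense that the solution vector $Y$ extends analytically across it. Apparent singularities of such a system can then be cleared by a rational gauge transformation $Y = G(z)\widetilde{Y}$ with $G \in \mathrm{GL}_m(\mathbb{Q}(z))$, producing a new matrix $\widetilde{B}_\ell(z) = G(z^{q^\ell})^{-1} B_\ell(z) G(z)$ that is both regular and invertible at every point of the orbit.

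The main obstacle is precisely this simultaneous desingularization: the gauge $G$ must be constructed so as to kill the singularities of $B_\ell$ along the entire infinite $q^\ell$-orbit of $\alpha$ without destroying invertibility. This is exactly the content of the construction developed in \cite{AF24_EM}; with that machinery in hand, the proposition follows.
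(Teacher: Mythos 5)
Your Part~(E) matches the paper exactly: both invoke Andr\'e's theorem \cite[Thm.~4.3]{An00}.

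For Part~(M$_q$), your route differs from the paper's, and there is a genuine gap. The paper first observes that $f$ is also an $M_{q^\ell}$-function and then cites (a minor modification of) \cite[Prop.~2.5]{AF24_EM}, which produces, from scratch, a $q^\ell$-Mahler equation regular at $\alpha$; the hypothesis on $\ell$ guarantees that $f$ is well-defined at all points $\alpha^{q^{\ell k}}$, $k\geq 0$, which are exactly the points at which that construction needs $f$ to be defined. You instead start from a \emph{$q$-Mahler} companion system $Y(z^q)=A(z)Y(z)$ and iterate $\ell$ times. The problem is that the companion solution vector is $Y(z)=(f(z),f(z^q),\ldots,f(z^{q^{m-1}}))^{\mathrm t}$, so for the singularity of $B_\ell$ at $\alpha$ to be ``apparent, in the sense that the solution vector $Y$ extends analytically across it,'' you need $f$ to be well-defined not merely at $\alpha$ but also at $\alpha^q,\ldots,\alpha^{q^{m-1}}$. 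When $\ell>1$ the hypotheses give no control over $f$ at the intermediate points $\alpha^{q^j}$ with $1\leq j<\ell$: there $|\alpha^{q^j}|$ may exceed the radius of convergence $\rho$, and since an $M$-function is only \emph{meromorphic} on the open unit disk, $f$ may genuinely have a pole at such a point, so that $Y$ does not extend analytically to $\alpha$. (For the same reason $B_\ell(z)=A(z^{q^{\ell-1}})\cdots A(z)$ involves $A$ at all the $q$-powers $z,\ldots,z^{q^{\ell-1}}$, so its singular locus is tied to the full $q$-orbit, not just the $q^\ell$-orbit.) Thus your Step~3 is not justified, and the argument really only goes through in the special case $\ell=1$. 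The paper's choice to work directly in the $q^\ell$-Mahler framework is precisely what sidesteps this issue, since then the solution vector $(f(z),f(z^{q^\ell}),\ldots)^{\mathrm t}$ evaluated at $\alpha$ only ever involves $f$ at the points $\alpha^{q^{\ell k}}$, which are the ones controlled by the hypotheses.
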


\begin{proof}
	Case (E) corresponds to \cite[Thm.\ 4.3]{An00}. Case (M$_q$) follows from the proof of \cite[Prop.\ 2.5]{AF24_EM}. This proposition states that an $M_q$-function 
	which is analytic on some centered disk with radius $R<1$ is a solution to a $q$-Mahler equation with no singularities on this disk. In fact, a minor modification of the proof implies the following: \textit{if $f$ is well-defined at $\alpha^{q^k}$ for every integer $k\geq 0$, then $f$ is  a solution to a $q$-Mahler equation for which $\alpha$ is regular}. 
	Since $f$ is also an $M_{q^\ell}$-function and since the assumption made on $\ell$ ensures that  $f$ is well-defined at $\alpha^{(q^\ell)^k}$ for every $k\geq 0$, this result 
	implies the existence of a $q^\ell$-Mahler equation satisfied by $f$ and for which $\alpha$ is regular.
	\end{proof}

\begin{rem} 
The proof of Case (E) does not rely on Beukers' Lifting Theorem \cite[Thm.\ 1.3]{Be06}, but instead on earlier work by André \cite{An00}, which transfers to $E$-functions some 
properties of $G$-functions established by the Chudnovsky brothers \cite{CC83}. 
In contrast, the proof of Case (M$_q$) depends on the Lifting Theorem for  $M_q$-functions \cite[Thm.\ 1.4]{AF17}, which serves as an analogue of Beukers' Lifting Theorem 
 for $M_q$-functions. 
\end{rem}

We are now ready to prove Theorem~\ref{thm:measure_sans_system}. 

\begin{proof}
	[Proof of Theorem \ref{thm:measure_sans_system}]
	We start with Case (E). Let $\partial$ denote the differential operator. For each $f_i$, $1\leq i\leq r$, let us 
	consider a differential equation given by Proposition~\ref{prop:good_eq}, say 
	\begin{equation}\label{eq:goodE}
	a_{i,0(z)}f_i(z)+a_{i,1}(z)\partial f_i(z)+a_{i,2}\partial^2f_i(z) + \cdot + a_{i,m_i}(z)\partial^{m_i}f_i(z)=0\,.
	\end{equation}
	Hence, for every $i$, $1\leq i\leq r$, $a_{i,m_i}(z)=z^{d_i}$ for some  integer $d_i\geq 0$. 
	The column vector $\prescript{\rm t}{}(f_i(z),\partial f_i(z),\ldots,\partial^{m_i-1}f_i(z))$ is solution to the linear system $\partial Y(z)=A_i(z)Y(z)$ where
	\begin{equation}\label{eq:Ai}
	A_i(z)=\begin{pmatrix}
	0 & 1 & 0 & \cdots & 0
	\\ \vdots & \ddots & \ddots & &\vdots
	\\ \vdots & & \ddots & \ddots & 0
		\\ 0 & \cdots & \cdots & 0 & 1 
	\\ -\frac{a_{i,0}(z)}{a_{i,m_i}(z)}& \cdots &\cdots& \cdots & -\frac{a_{i,m_{i}-1}(z)}{a_{i,m_i}(z)}
	\end{pmatrix}\,
	\end{equation}
	and the point $\alpha$ is regular with respect to this system. Then the functions
	$\partial^k f_{i}$, $1\leq i \leq r$, $0 \leq k \leq m_i-1$,  form the coordinates of a vector solution to  the differential system $\partial Y(z)=A(z)Y(z)$, where
	$$
	A(z)=A_1(z)\oplus \cdots \oplus A_r(z)=\begin{pmatrix}
	A_1(z) \\ & \ddots \\ &&A_r(z)
	\end{pmatrix}
	$$
	and the point $\alpha$ remains regular with respect to this system. Furthermore, the transcendence degree of the field extension of $\Q(z)$ generated by all these functions is equal to $\tau$. The  result thus follows directly from Theorem~\ref{thm:measure_system}.
	
	\medskip

We prove Case (M$_q$) in the same way. Let $\ell$ be an integer large enough, so that $\vert \alpha^{q^\ell}\vert$ is smaller than each of the radius of convergence of the $f_i$, $1\leq i\leq r$. 
By Proposition~\ref{prop:good_eq},  each $f_i$ is solution to a $q^\ell$-Mahler equation, say 
	\begin{equation}\label{eq:goodM}
	a_{i,0}(z)f_i(z)+a_{i,1}(z)f_i(z^{q^\ell})+ \cdot + a_{i,m_i}(z)f_i(z^{q^{\ell m_i}})=0\,,
	\end{equation}
	which is regular at $\alpha$. 
	Then the column vector $\prescript{\rm t}{}(f_i(z),f_i(z^{q^\ell}),\ldots,f_i(z^{q^{\ell m_{i-1}}}))$ is solution to the $q^\ell$-Mahler system $Y(z^{q^\ell})=A_iY(z)$, 
	where $A_i$ is defined as in \eqref{eq:Ai}. Furthermore, the point $\alpha$ is regular with respect to this system. Then the functions
	$f_{i}(z^{q^{\ell k}})$, $1\leq i \leq r$, $0 \leq k \leq m_i-1$, form the coordinates of a vector solution to  the $q^\ell$-Mahler system $Y(z^{q^\ell})=AY(z)$, 
	where $A(z)=A_1(z)\oplus \cdots \oplus A_r(z)$, and again the point $\alpha$ is regular with respect to this system. Furthermore, the transcendence degree of the field extension of $\Q(z)$ generated by all these functions is at most $\tau$. The  result thus follows directly from Theorem~\ref{thm:measure_system}.
\end{proof}

\section{State of the art}\label{sec:history}

In this final section, we present a brief history of the previous results related to the present work.
\subsection{The case of $E$-functions}

We begin with results concerning the values of $E$-functions. 

\subsubsection{Algebraic independence measures}

Algebraic independence measures for the values of particular $E$-functions at algebraic points have long been known. Siegel \cite{Si29} first obtained such a measure in the case of the Bessel function $J_0(z)$ and its derivative. A few years later, Mahler \cite{Ma32} derived a similar measure for the values of the exponential function evaluated at several linearly independent algebraic points. The bounds obtained in these two results have the same form as the one given in Theorem~\ref{thm:LGN} and can therefore be considered 
  as precursors to it. When Shidlovskii proved his famous theorem, it was clear to expert in the field that his proof had to lead to a measure of algebraic independence. Such a result was first obtained by Lang \cite{La62}, who proved Case~(E) of Theorem~\ref{thm:LGN}.

After Lang proved his theorem, subsequent work focused on refining the constants $C_1$ and $C_2$ that appear in it. Such refinements were achieved by Galochkin \cite{Ga68}, Nesterenko \cite{Ne77}, Shidlovskii \cite{Sh79,Sh82}, Brownawell \cite{Br85}, and Gorelov \cite{Gor90}, to name just a few. Lang had already obtained an explicit value for $C_2$, and these results progressively lowered $C_2$ and made $C_1$ more effective (see \cite[Chap.\ 5, \S 5.2]{FN}). 
 In Case (E) of Theorem~\ref{thm:LGN}, according to \cite[Thm.\ 1]{Gor90}, we can now take 
\begin{equation}\label{eq:bounds_ci}
C_1 = e^{-e^{C_3\deg(P)^{2m}\ln(\deg(P)+1)}}\quad \text{ and } \quad C_2=\sqrt{m}4^mh^{m+1}\,,
\end{equation}
where $h$ is the degree of number field spanned by the coefficients of the $E$-functions and 
the point $\alpha$, and $C_3$ is a positive real number depending only on $f_1,\ldots,f_m$ and $\alpha$. 
Obtaining an effective value for $C_3$ has long been a challenge, as it depends on the Shidlovskii constant, a parameter appearing in the Shidlovskii lemma 
(cf.\ \cite[Lem.\ 5.2, p.221]{FN}). This goal was finally achieved thanks to the work of Bertrand and Beukers \cite{BB85} and Bertrand, Chirskii, and Yebbou \cite{BCY04}.

In another direction, Shidlovskii \cite[Chap.\ 12, \S 4]{Sh_Liv} provided a series of results generalizing Theorem~\ref{thm:LGN} for algebraically independent $E$-functions that do not satisfy Assumption \ref{Assump_ii} (cf.\ p.\ 2). However, he did not express his results as an alternative, as we do in Theorem~\ref{thm:measure_system}. As a result, he had to impose certain technical conditions to ensure the algebraic independence of the values he considered. These technical conditions prevent him from deducing a clean statement as in Theorem~\ref{thm:measure_system}.

Recently, Fischler and Rivoal \cite{FR23} were able to derive a \textit{linear independence measure} for the values of general $E$-functions from a similar measure obtained by Shidlovskii for the values of $E$-functions with rational coefficients at non-zero rational points  \cite[Thm.\ 1, p.\ 358]{Sh_Liv} and Beukers' Lifting Theorem \cite{Be06}. Like our result, theirs does not require the (linear) independence of the functions and is instead expressed as an alternative. 
Since $E$-functions form a ring and a polynomial can always be viewed as a linear form in monomials, such a general linear independence measure leads to an algebraic independence measure similar to the one given in Theorem~\ref{thm:measure_system}. However, the algebraic independence measure obtained this way is weaker than the one we obtain (see also Section~\ref{sec:E_func_measures}). In particular, the effectivity of $C_1$ in that case remains unclear. Moreover, their proof relies on Beukers' theorem \cite{Be06}, while our proof of Theorem~\ref{thm:measure_system} follows the classical approach of Siegel, Shidlovskii and Lang.

\subsubsection{Irrationality and transcendence measures}

The first results concerning the part of Conjecture~\ref{conj} related to the set $\mathbf E$ were established by Siegel \cite{Si29} in 1929. The only known case of the conjecture pertains to the values of $E$-functions that are solutions to (possibly inhomogeneous) first-order equations. This was shown by Lang \cite{La62} in the homogeneous case and can be found in Galochkin \cite{Ga68} for the inhomogeneous case, which does not requires any new idea. All other results either demonstrate that the number in question is not a Liouville number, or that it is either an $S$-number or a $T$-number. Numerous partial results, derived from those mentioned in the previous section, follow in this vein. 
The first general result for all elements of $\mathbf{E}$ is recent and due to Fischler and Rivoal \cite{FR23}. They proved that no Liouville number belongs to $\mathbf{E}$ and, more generally,  that all transcendental elements of $\mathbf{E}$ are either $S$- or $T$-numbers.

Our approach provides an alternative proof of their results, as follows. Let $f(z)$ be an arbitrary $E$-function and $\alpha$ be an arbitrary non-zero algebraic number. Furthermore, assume that $f(\alpha)$ is transcendental. For every positive integer $d$, we apply Case (E) of Theorem~\ref{thm:measure_sans_system} with $r=1$ to deduce that there exist two positive real numbers $A$ and $B$ such that the inequality
$$
\vert P(f(\alpha))\vert >  A H(P)^{B} \,,
$$
holds for all non-zero polynomials $P \in \mathbb{Z}[X]$ of degree $d$. Thus, we have $w_d(f(\alpha))<\infty$. Since algebraic numbers are not Liouville numbers, taking $d=1$, we obtain that no element in $\mathbf E$ is a Liouville number, proving one part of Corollary~\ref{coro:Lnumbers}. More generally, we conclude that all transcendental numbers in $\mathbf{E}$ are either $S$-numbers or $T$-numbers. Furthermore, in the case where $\tau = 1$ in Theorem~\ref{thm:measure_sans_system}, we can infer that $f(\alpha)$ is an $S$-number. This last result slightly generalizes that of Lang and Galochkin mentioned above.

\subsection{The case of $M$-functions}

We now  recall some results concerning the values of $M$-functions.

\subsubsection{Measures of algebraic independence}

Mahler's method was introduced by Mahler in a series of three papers \cite{Ma29,Ma30a,Ma30b}. At that time, $M$-functions had not yet been defined, but some of Mahler's results retrospectively prove the algebraic independence of the values of certain $M$-functions at non-zero algebraic points. The first transcendence measures were obtained in the early 1980s by Galochkin \cite{Gal80}, Millet \cite{Mil82}, and Becker \cite{Bec86}, who quantified Mahler's approach. Independently, Becker \cite{Bec_Announc, Bec88} and Nesterenko \cite{Ne85} provided the first algebraic independence measures for the values of several $M_q$-functions, each of which is a solution to an inhomogeneous equation of order 1. 
In contrast to Becker's work, Nesterenko \cite{Ne85} did not follow Mahler's original method. Instead, he developed an entirely new approach based on Elimination Theory. This innovation allowed him to achieve significantly better bounds than Becker \cite{Bec88}. Moreover, as noted by Wass \cite{Wa_these}, the restriction to first-order equations is unnecessary. Ku. Nishioka \cite{Ni90} subsequently extended Nesterenko's method to general linear Mahler systems. Her result, known as \emph{Nishioka's Theorem} \cite[Thm.\ 4.2.1]{Ni_Liv}, serves as the analogue of the Siegel-Shidlovskii Theorem within the framework of $M$-functions.\footnote{In light of the discussion above, it might be more accurate to refer to this result as the \emph{Nesterenko--Nishioka Theorem}.} 
In the same year, Nishioka \cite{Ni90_2} established a multiplicity lemma that is, in some sense, analogous to the Shidlovskii Lemma for $E$-functions. This result enabled Becker \cite{Bec91} to provide the first proof of Case (M$_q$) of Theorem~\ref{thm:LGN}. Subsequently, Nishioka \cite{Ni91} refined Becker's bounds for $C_1$ and $C_2$ using the same approach. These bounds have not been significantly improved since: $C_2$ can, in principle, be effectively bounded (though it seems that no one has yet provided such a bound), and we can take 
\begin{equation}\label{eq:bounds_ci_M} C_1 = e^{-C_2d^{2m+2}}\,.
\end{equation}
The bound obtained in \eqref{eq:minPMQ} has the same form, with 
$m$ replaced by $t$. 

Beyond Case (M$_q$) of Theorem~\ref{thm:LGN}, several algebraic independence measures have been established in broader contexts, for example, for $M$-functions that do not necessarily satisfy Assumption \ref{Assump_i} given on page \pageref{Assump_i}. These results could also be used to deduce Case (M$_q$) of Theorem~\ref{thm:measure_system}. In this direction, Philippon \cite{Ph98} introduced the concept of a $K$-function and showed that $M$-functions are particular examples of $K$-functions. In \cite[Thm.\,6]{Ph98}, he provided an algebraic independence measure for linear Mahler systems with polynomial coefficients. 
In 2013, Zorin \cite{Zo13} released a preprint on certain generalizations of $M$-functions, providing algebraic independence measures. In the original version, the main result concerning $M$-functions was incorrect. After a discussion with the first author, Zorin released several revised versions of his preprint (in 2016 and 2017), in which he announced part of Corollary~\ref{coro:Lnumbers} related to the set $\mathbf M$ and, more generally, that $\mathbf M$ contains no $U$-number. His general result also implies an algebraic independence measure similar to Case (M$_q$) of Theorem~\ref{thm:measure_system}, although it is weaker than the one obtained in this paper. Furthermore, this revised version was never published in an academic journal, leaving the status of its proof uncertain. 
Finally, in her PhD thesis, Fernandes \cite{Fe19} proposed a measure in the spirit of Theorem~\ref{thm:measure_system}, but it is weaker than the one in \eqref{eq:minPMQ}  and relies on a result by Jadot \cite{Ja96}, which has also not been published in an academic journal. 
Note that all the results just mentioned ultimately rely on Elimination Theory, although their presentations differ from the one given in Nishioka's monograph \cite{Ni_Liv}. In proving Theorem~\ref{thm:measure_system}, we chose to follow the approach presented in \cite{Ni_Liv}, which provides more details than \cite{Ja96, Ph98, Zo13, Fe19}, in order to make our argument easier for the reader to follow. We also want to emphasize that the proof of Theorem~\ref{thm:measure_system} does not require any new tools beyond those introduced by Nesterenko and used by Becker and Nishioka when they obtained the first versions of Case~(M$_q$) of Theorem~\ref{thm:LGN}, and that Theorem~\ref{thm:measure_system} could thus have been obtained much earlier using the existing techniques.


\subsubsection{Irrationality and transcendence measure}

The problem addressed in Corollary~\ref{coro:Lnumbers} has a long history, which we will briefly outline here. In his first paper on the subject in 1929, Mahler \cite{Ma29} noted, although without proving it, that some of the numbers he had just shown to be transcendental could not be Liouville numbers. Conjecture~\ref{conj} for elements of $\bf M$ is considered folklore and is difficult to trace, but it is clear that it was known to all specialists in the field. The only known case of the conjecture concerns the values of $M$-functions that are solutions to (possibly inhomogeneous) first-order equations, which was proved by Galochkin \cite{Gal80} in 1980. 
All other results either showed that the number under study is not a Liouville number or that it is either an $S$-number or a $T$-number. 

Some specific cases have been studied explicitly. In 1993, Shallit conjectured that no automatic real number could be a Liouville number (the conjecture later appeared in print in \cite{Sh99}). In addition, in his correspondence with Shallit, Becker conjectured that transcendental automatic real numbers are $S$-numbers. These conjectures concern a particular class of $M$-functions, namely the generating series of automatic sequences, evaluated at specific rational points of the form $1/b$, where $b \geq 2$ is an integer. Shallit's conjecture was proved in 2006 by the first author and Cassaigne \cite{AC06}, without using Mahler's method but rather employing the approach based on the Schmidt subspace theorem introduced in \cite{ABL,AB07}. Using the same method, the first author and Bugeaud \cite{AB11} obtained a transcendence measure for irrational automatic real numbers in 2011, which implies that these numbers are either $S$-numbers or $T$-numbers in Mahler's classification.

In 2013, Zorin \cite{Zo13} released a preprint (see earlier discussion), claiming that no element of $\bf M$ can be a $U$-number. In 2015, Bell, Bugeaud, and Coons \cite{BBC} extended the result of \cite{AC06} to include the values of any $M$-function with rational coefficients, evaluated at a rational point of the form $1/b$. These authors also proved that values of generating functions of regular sequences at points of the form $1/b$ are either $S$-numbers or $T$-numbers. Such generating functions are more general than those associated with automatic sequences.

Let $f(z)$ be an arbitrary $M$-function and let $\alpha$ be an arbitrary non-zero algebraic numbers such that $f(\alpha)$ is well-defined. 
Furthermore, assume that $f(\alpha)$ is transcendental. For every positive integer $d$, we can infer from Case (M$_q$) of Theorem~\ref{thm:measure_sans_system} with $r=1$ that there exist two positive real numbers, $A$ and $B$, such that the inequality 
$$
\vert P(f(\alpha))\vert >  A H(P)^{B} \,,
$$
holds for all non-zero polynomials $P \in \mathbb{Z}[X]$ of degree $d$. Thus, we have $w_d(f(\alpha))<\infty$. Since algebraic numbers are not Liouville numbers, taking $d=1$, we obtain that no element in $\mathbf M$ is a Liouville number, proving the second part of Corollary~\ref{coro:Lnumbers} and, more generally, that all transcendental numbers in $\bf M$ are either $S$-numbers or $T$-numbers in Mahler's classification. Furthermore, in the case where $\tau = 1$ in Theorem~\ref{thm:measure_sans_system}, we conclude that $f(\alpha)$ is an $S$-number.  This last result slightly generalizes that of Galochkin mentioned above.


\end{document}